\documentclass[10pt]{amsart}
\usepackage{amssymb}
\usepackage{epsfig}
\usepackage{url}
\usepackage[T1]{fontenc}
\usepackage{color}
\usepackage{tikz}
\usetikzlibrary{arrows}
\theoremstyle{plain}
\makeatletter
\@namedef{subjclassname@2020}{%
  \textup{2020} Mathematics Subject Classification}
\makeatother
\newtheorem{theorem}{Theorem}[section]
\newtheorem{definition}[theorem]{Definition}
\newtheorem{corollary}[theorem]{Corollary}
\newtheorem{lem}[theorem]{Lemma}
\newtheorem{prop}[theorem]{Proposition}

\def\bbb{\mathbb}

\renewcommand{\phi}{\varphi}

\newcommand{\N}{\bbb{N}}
\newcommand{\Z}{\bbb{Z}}
\newcommand{\Q}{\bbb{Q}}

\newcommand{\bs}{\backslash}

\setlength{\oddsidemargin}{0.5cm}  
\setlength{\evensidemargin}{0.5cm} 
\setlength{\topmargin}{-2.0cm} \setlength{\textheight}{25cm}
\setlength{\textwidth}{15cm}

\begin{document}
\title{Binomial coefficients, roots of unity and powers of prime numbers}
\author{Piotr Miska}
\address{Institute of Mathematics\\ Jagiellonian University in Krak\'{o}w\\ Krak\'{o}w, Poland\\ ORCID: 0000-0002-6792-8634.}
\email{piotr.miska@uj.edu.pl}

\keywords{binomial coefficient, prime number, power, root of unity, $p$-adic valuation} 
\subjclass[2020]{11A07, 11A41, 11B65}
\thanks{The research of the author was partially supported by the grant of the Polish National Science Centre no. UMO-2019/34/E/ST1/00094 and the scholarship START 2019 of the Foundation for Polish Science no. START 59.2019.}

\maketitle

\begin{abstract}
Let $t\in\N_+$ be given. In this article we are interested in characterizing those $d\in\N_+$ such that the congruence
$$\frac{1}{t}\sum_{s=0}^{t-1}{n+d\zeta_t^s\choose d-1}\equiv {n\choose d-1}\pmod{d}$$
is true for each $n\in\Z$. In particular, assuming that $d$ has a prime divisor greater than $t$, we show that the above congruence holds for each $n\in\Z$ if and only if $d=p^r$, where $p$ is a prime number greater than $t$ and $r\in\{1,\ldots ,t\}$.
\end{abstract}

\section{Introduction}

By $\N$, $\N_+$, $\Z$ and $\Q$ we denote the sets of non-negative integers, positive integers, integers and rational numbers, respectively. 

For a fixed prime number $p$ and a non-zero rational number $x$, the value $\nu_p(x)$ is the unique integer $t$ such that $x=p^t\cdot\frac{a}{b}$ for some integers $a,b$ not divisible by $p$. For $x=0$ we put $\nu_p(0)=+\infty$. The value $\nu_p(x)$ is called the $p$-adic valuation of the number $x$. 

In \cite{MiUl} we studied the numbers $H_d(n)$ of permutations in $S_n$ which can be written as products of pairwise disjoint cycles of length $d$. We also considered related polynomials $$H_d(n,x)=\sum_{j=0}^{\left\lfloor\frac{n}{d}\right\rfloor}\frac{n!}{(n-dj)!j!d^j}x^{n-dj},\quad d,n\in\N,\quad d\geq 2.$$ These polynomials generalize the numbers $H_d(n)$ as $H_d(n,1)=H_d(n)$. Let us note that the coefficient of the $i$-th power of $x$ of the polynomial $H_d(n,x)$ is the number of permutations in $S_n$ that are the products of pairwise disjoint $d$-cycles and having exactly $i$ fixed points. One of the last results in that paper is that the congruence
$$H_d(n+d,x)H_d(n-d,x)\equiv H_d(n,x)^2\pmod{d}$$
is true for any positive integer $n\geq d$ if and only if $d$ is a prime number or a square of some prime number. The proof of the result is reduced to the study of congruences of the form
$${n+d\choose d-1}+{n-d\choose d-1}\equiv 2{n\choose d-1}\pmod{d}.$$
We showed that if $d$ is not a power of $2$, then the above congruence holds for each positive integer $n\geq d$ if and only if $d\in\{p,p^2\}$ for some odd prime number $p$. This is our motivation to consider congruences
\begin{align}\label{1}
\frac{1}{t}\sum_{s=0}^{t-1}{n+d\zeta_t^s\choose d-1}\equiv {n\choose d-1}\pmod{d}
\end{align}
for fixed $t\in\N_+$, where $\zeta_t=e^{\frac{2\pi i}{t}}$ is the $t$-th primitive root of unity. Here we understand $x\choose k$, $k\in\N$, as $\frac{(x)_k}{k!}$, where 
$$(x)_k=\prod_{i=0}^{k-1} (x-i)$$
and we assume that a product over the empty set is equal to $1$. Moreover, for $x,y\in\Q$ and $d\in\N_+$ we write $x\equiv y\pmod{d}$ if $d$ divides the numerator of $x-y$ written as an irreducible quotient of integers (in the sequel we will see that the left hand side of \eqref{1} is indeed a rational number). In other words, if $\nu_p(x-y)\geq\nu_p(d)$ for each prime divisor $p$ of the number $d$. We observed that (\ref{1}) holds for each $n\in\Z$ if and only if $d$ is a power of a prime number with exponent at most equal to $t$. As we will see in the further part of the paper, this conjecture is true under the additional assumption that $d$ has a prime divisor greater than $t$. Actually, for every $t\in\N_+$ we will classify (up to finitely many exceptions) all the values of $d$ for which (\ref{1}) holds for each $n\in\Z$.

We note that this is not the first result which states that some congruence or family of congruences depending on $d$ holds for (almost) all prime values of $d$ (or some expression dependent on $d$) and does not hold in general for composite values of $d$. Here we have several famous classical results of this type:
\begin{itemize}
\item if $d$ is a prime number, then $a^d\equiv a\pmod{d}$ for each $a\in\Z$ (Fermat's little theorem);
\item $d$ is a prime number if and only if $(d-1)!\equiv -1\pmod{d}$ (Wilson's theorem);
\item if $d$ is a prime number greater than $3$, then ${2d-1\choose d-1}\equiv 1\pmod{d^3}$ (Wolstenholme's theorem, for its variations see \cite{ChamDil, HelTer, McIn, McInRoe, Zhao});
\item if $d$ is a prime number, then for any $m,n\in\N$ we have
$${m\choose n}\equiv\prod_{j=0}^k{m_j\choose n_j}\pmod{d},$$
where $m=\sum_{j=0}^km_jd^j$ and $n=\sum_{j=0}^kn_jd^j$ are the base $d$ expansions of $m$ and $n$, respectively (Lucas's theorem, for its generalizations see \cite{Bai, DavWeb1, Gran, Mes, Zhao});
\item if $M_d=2^d-1$, $d\geq 2$, and $(S_j)_{j\in\N}$ is given by the recurrence
$$S_0=0,\quad S_j=S_{j-1}^2-2,\quad j>0,$$
then $M_d$ is a prime number if and only if $S_{d-2}\equiv 0\pmod{M_d}$ (Lucas-Lehmer primality test for Mersenne numbers, see e.g. \cite{Bru, Rod});
\item if $d$ is a prime number, then 
$$\sum_{0\leq k\leq n+d-1, k\equiv r\pmod{d-1}}{n+d-1\choose k}\equiv\sum_{0\leq k\leq n, k\equiv r\pmod{d-1}}{n\choose k}\pmod{d}$$ 
(J. W. L. Glaisher, 1899, see \cite{Gla, Gran}, for generalization of Glaisher's congruence see \cite{SunTau1}).
\end{itemize}
As we see, the above facts have been stimulating research in number theory up to now. However, there are many more results on congruences modulo prime numbers or prime powers involving binomial coefficients, e.g. \cite{CaoSun, DavWeb2, MengSun, Sun1, Sun2, Sun3, SunDav, SunTau2, Sun4, Sun5, Wan, ZhaoSun}.

In \cite{MiUl} we checked for which values of $d$ the congruence (\ref{1}) with $t=2$ is satisfied for all $n\geq d$. However, it is easy to see that it is the same as to check (\ref{1}) for all $n\in\Z$. Indeed, if $n_1\equiv n_2\pmod{td!}$, then 
$$\frac{1}{t}\sum_{s=0}^{t-1}{n_1+d\zeta_t^s\choose d-1}\equiv \frac{1}{t}\sum_{s=0}^{t-1}{n_2+d\zeta_t^s\choose d-1}\pmod{d}$$
and 
$${n_1\choose d-1}\equiv {n_2\choose d-1}\pmod{d}.$$

\section{Main results}

Before we state the main theorem of the paper, we will define the set $A_t$ and condition $C_t$ for $t\in\N_+$.

\begin{definition}
Let $t\in\N_+$. We define the set $A_t$ as the set of all the values of $d\in\N_+$ satisfying one of the following conditions:
\begin{itemize}
\item $d=p^r$, where $r$ is an integer lying in the interval $$\left((\alpha_p+1)(t+1)-\frac{p^{\alpha_p+1}-1}{p-1},(\alpha_p+1)t-\nu_p(t!)\right],$$ where $\alpha_p=\lfloor\log_p t\rfloor$, and $p=2$ or $p$ is a prime number less than $t$ such that $t$ is not a power of $p$,
\item $d=p_1^{r_1}\cdots p_u^{r_u}$, where $u\geq 2$, $p_1,\ldots ,p_u$ are pairwise distinct prime numbers less than or equal to $t$, where $r_1,\ldots ,r_u\in\N_+$ and $n/p_l^{r_l}\leq t$ for each $l\in\{1,\ldots ,u\}$.
\end{itemize}
\end{definition}

\begin{definition}
Let $t\in\N_+$. We say that $d\in\N_+$ satisfies the condition $C_t$ if
\begin{align*}
\frac{1}{t}\sum_{s=0}^{t-1}{n+d\zeta_t^s\choose d-1}\equiv {n\choose d-1}\pmod{d}
\end{align*}
holds for each $n\in\Z$.
\end{definition}

The main result of the paper is the following.

\begin{theorem}\label{main}
Let us fix $t\in\N_+$. Then $d\in\N_+\bs A_t$ satisfies the condition $C_t$ if and only if one of the following conditions holds:
\begin{itemize}
\item $d\leq t$; in this case we may replace the symbol $\equiv$ by $=$,
\item $d=p^r$, where $p$ is a prime number and $r$ is a positive integer at most equal to $$(\alpha_p+1)(t+1)-\frac{p^{\alpha_p+1}-1}{p-1}.$$
\end{itemize}
\end{theorem}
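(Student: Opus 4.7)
The plan is to reformulate condition $C_t$ as a finite collection of $p$-adic inequalities involving Stirling numbers of the first kind, and then to analyze these inequalities separately in the prime power and composite cases.

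By Vandermonde's convolution,
\begin{align*}
\frac{1}{t}\sum_{s=0}^{t-1}\binom{n+d\zeta_t^s}{d-1}=\sum_{j=0}^{d-1}\binom{n}{j}\,c_j,\qquad c_j:=\frac{1}{t}\sum_{s=0}^{t-1}\binom{d\zeta_t^s}{d-1-j}.
\end{align*}
Since $\{\binom{n}{j}\}_{j=0}^{d-1}$ is a $\Z$-basis of the integer-valued polynomials of degree $<d$, iterated finite differencing shows that $C_t$ is equivalent to $c_{d-1}=1$ (which always holds) together with $c_j\equiv 0\pmod d$ for $j<d-1$. Expanding $x^{\underline m}=\sum_k s(m,k)x^k$ (Stirling numbers of the first kind) and using $\tfrac{1}{t}\sum_s\zeta_t^{sk}=[t\mid k]$, the substitution $m:=d-1-j$ yields
\begin{align*}
c_{d-1-m}=\frac{1}{m!}\sum_{\substack{t\mid k\\0\le k\le m}}s(m,k)\,d^k.
\end{align*}
For $m<t$ the sum is empty, so if $d\le t$ then \eqref{1} reduces to an equality (giving the first bullet of the theorem). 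Otherwise $C_t$ is equivalent to the $p$-adic inequalities
\begin{align*}
\nu_p\!\Big(\sum_{\substack{t\mid k\\t\le k\le m}}s(m,k)\,d^k\Big)\ge\nu_p(d)+\nu_p(m!)\qquad\forall\,p\mid d,\ \forall\,m\in\{t,\dots,d-1\}.
\end{align*}

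For a prime power $d=p^r$, the $k=t$ summand has the smallest $p$-adic valuation $rt+\nu_p(s(m,t))$ when $r$ is moderate, and the inequality reduces to $r(t-1)\ge\nu_p(m!)-\nu_p(s(m,t))$. Combining Legendre's formula $\nu_p(m!)=(m-s_p(m))/(p-1)$ with a bound on $\nu_p(s(m,t))$ derived from the recursion $s(m,k)=s(m-1,k-1)-(m-1)s(m-1,k)$, I would identify the maximiser of $\nu_p(m!)-\nu_p(s(m,t))$ on $[t,p^r-1]$ to be near $m=p^{\alpha_p+1}-1$, a number whose base-$p$ digits form a solid block of $p-1$'s. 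A direct computation at this $m$ gives exactly the bound $r\le(\alpha_p+1)(t+1)-\tfrac{p^{\alpha_p+1}-1}{p-1}$; for smaller $m$ the inequality is weaker, and for $r$ above this bound but $d\notin A_t$ the higher-order terms $k\ge 2t$ cannot compensate and the congruence fails. It is precisely this possible compensation mechanism that defines the exceptional prime-power entries of $A_t$.

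For composite $d\notin A_t$ with at least two distinct prime factors, either some prime divisor $p>t$ or, all prime divisors being $\le t$, some complementary factor $d/p_l^{\nu_{p_l}(d)}$ exceeds $t$. In either subcase I would select the offending prime $p$ and exploit the fact that the admissible range of $m$ now extends above $p^{\nu_p(d)}$, where $\nu_p(m!)$ takes values substantially larger than in the pure $p$-power case; choosing $m$ as a small multiple of $p^{\nu_p(d)}$ (or as $p^{\nu_p(d)}$ itself), one verifies that the leading term $s(m,t)d^t$ has $p$-adic valuation strictly less than $\nu_p(d)+\nu_p(m!)$, refuting $C_t$. The main obstacle will be the fine $p$-adic control of Stirling numbers $s(m,k)$ simultaneously for all relevant $m$ and all $k\in\{t,2t,\dots\}$: pinpointing the critical $m$ that realizes the boundary constant $(\alpha_p+1)(t+1)-\tfrac{p^{\alpha_p+1}-1}{p-1}$ in the prime-power case, and ruling out salvage cancellations from higher-order terms except precisely in the regime carved out by $A_t$.
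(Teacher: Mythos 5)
Your opening reduction is correct and is essentially the paper's own machinery in different clothing: the Vandermonde/Stirling identity $c_{d-1-m}=\frac{1}{m!}\sum_{t\mid k}s(m,k)d^{k}$ together with the finite-difference argument is exactly the content of Lemma \ref{lem1}, Lemma \ref{equivconds} and Corollary \ref{equivcond} (the paper specializes at $n=-1$ and writes $|s(m,k)|$ as an elementary symmetric function of $1,\dots,m$, which is the same object). The genuine gap is in the analytic core, which you leave as a program (``I would identify\dots'', ``one verifies\dots'') and whose one concrete quantitative claim is wrong: the maximiser of $\nu_p(m!)-\nu_p(s(m,t))$ over $m\in[t,p^r-1]$ is \emph{not} near $m=p^{\alpha_p+1}-1$. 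Writing $|s(m,t)|=(m-1)!\sum_{T}\prod_{i\in T}i^{-1}$ over $(t-1)$-subsets $T$ of $\{1,\dots,m-1\}$, one has $\nu_p(s(m,t))\ge\nu_p((m-1)!)-S$, where $S$ is the sum of the $t-1$ largest $p$-adic valuations occurring in $\{1,\dots,m-1\}$ (with equality when the extremal subset is unique), so the quantity you must maximise is $\nu_p(m)+S$. This is maximised by taking $m$ near the \emph{top} of the admissible range, where integers of valuation $r-1,r-2,\dots$ become available; the critical index grows like $p^{r-1}$, and indeed the paper takes $c=tp^{\nu_p(d)-\alpha_p-1}$ in Proposition \ref{toolargevaluation}. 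Already $t=1$ is decisive: $s(m,1)=\pm(m-1)!$, so $\nu_p(m!)-\nu_p(s(m,1))=\nu_p(m)$, whose maximum over $[1,p^r-1]$ is $r-1$, attained at $m=p^{r-1}$, recovering the correct criterion $r\le 1$; at your proposed $m=p^{\alpha_p+1}-1=p-1$ the quantity is $0$ and the test $r(t-1)\ge 0$ passes for every $r$, which would wrongly certify $C_1$ for all prime powers. The number $p^{\alpha_p+1}-1$ enters the final answer only through the correction term in $(\alpha_p+1)(t+1)-\frac{p^{\alpha_p+1}-1}{p-1}$, not as the location of the critical index.

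Beyond this, the two hard points of the theorem are not addressed. For sufficiency one needs a lower bound on the valuation of \emph{every} term, including $k\ge 2$ and including the count of how many integers in $\{1,\dots,d-1\}$ carry each valuation when $d$ is not a prime power; this is Proposition \ref{suffcond}, and it is exactly where $k_p(d)$ and the exceptional set $A_t$ arise. For necessity it is not enough that $s(m,t)d^{t}$ has small valuation: you must exhibit an $m$ at which a \emph{single} monomial strictly dominates all others $p$-adically, so that no cancellation can raise the valuation of the sum; the paper engineers this by choosing $c=tp^{\nu_p(d)-\beta_p}$ so that the minimal-valuation subset $\{i_1,\dots,i_t\}$ is unique (Propositions \ref{beta} and \ref{toolargevaluation}). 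Your sentence that the higher-order terms ``cannot compensate'' is precisely the statement that requires proof, and your composite case inherits the same unproved non-cancellation claim.
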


The following result is a direct consequence of Theorem \ref{main}.

\begin{corollary}\label{simple}
Let $t,d\in\N_+$ be such that $d$ has a prime divisor greater than $t$. Then the condition $C_t$ is satisfied if and only if $d=p^r$ for some prime number $p>t$ and a positive integer $r\leq t$.
\end{corollary}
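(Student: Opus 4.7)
The plan is to deduce the corollary as a near-immediate specialization of Theorem \ref{main}, so only two things need checking: that the hypothesis on $d$ forces $d \in \N_+\bs A_t$, and that the upper bound on the exponent appearing in Theorem \ref{main} simplifies cleanly when the prime divisor exceeds $t$.

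First I would inspect the definition of $A_t$ and observe that every $d \in A_t$ has all of its prime divisors at most $t$: the first bullet covers prime powers $p^r$ with $p=2$ or $p<t$, and the second covers products $p_1^{r_1}\cdots p_u^{r_u}$ with each $p_l\leq t$. Hence the standing hypothesis that $d$ admits a prime divisor greater than $t$ automatically places $d$ in $\N_+\bs A_t$, and Theorem \ref{main} becomes applicable without further conditions.

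Theorem \ref{main} then leaves two alternatives for such $d$: either $d \leq t$, or $d = p^r$ with $p$ prime and $r \leq (\alpha_p+1)(t+1) - \frac{p^{\alpha_p+1}-1}{p-1}$, where $\alpha_p = \lfloor \log_p t \rfloor$. The first is excluded at once, since a prime divisor $p > t$ of $d$ forces $d \geq p > t$. In the second alternative, this same prime divisor must be $p$ itself, so $p > t$; then $\alpha_p = 0$, and the exponent bound degenerates to
\[
(0+1)(t+1) - \frac{p-1}{p-1} = t.
\]
Conversely, any $d = p^r$ with $p > t$ prime and $1 \leq r \leq t$ again lies in $\N_+\bs A_t$ (its only prime divisor exceeds $t$), and the same instance of Theorem \ref{main} yields $C_t$. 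This establishes the biconditional.

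There is no substantial obstacle: the whole argument is bookkeeping plus one small algebraic simplification of the exponent bound. The only minor care required is to confirm that the two sides of the ``if and only if'' in Theorem \ref{main} both transfer cleanly under the specialization $p > t$, which they do because the description is uniform in $d$ across $\N_+\bs A_t$.
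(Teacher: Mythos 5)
Your proposal is correct and is exactly the intended derivation: the paper states Corollary \ref{simple} as a direct consequence of Theorem \ref{main}, and your write-up simply makes explicit the two needed observations (that a prime divisor $p>t$ forces $d\notin A_t$ and $d>t$, and that $\alpha_p=0$ collapses the exponent bound to $t$). The only edge case worth a word is $t=1$ with $p=2$ in the first bullet of the definition of $A_t$, but there the prescribed interval for $r$ is empty, so your claim that every element of $A_t$ has all prime divisors at most $t$ still holds.
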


\section{Proof of Theorem \ref{main}}

It may not seem obvious that the value $\frac{1}{t}\sum_{s=0}^{t-1}{n+d\zeta_t^s\choose d-1}$ is rational. However, a straightforward computation shows the following.

\begin{lem}\label{lem1}
For each $c\in\N$ we have
\begin{align*}
\frac{1}{t}\sum_{s=0}^{t-1}{n+d\zeta_t^s\choose c}-{n\choose c}=\frac{1}{c!}\sum_{k=1}^{\left\lfloor\frac{c}{t}\right\rfloor}d^{kt}\sum_{n-c+1\leq j_1<\ldots <j_{c-kt}\leq n} j_1\cdot \ldots \cdot j_{c-kt}.
\end{align*}
\end{lem}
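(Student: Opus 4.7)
The plan is to prove this by a direct expansion followed by the root-of-unity averaging identity. Viewing $(n+d\zeta_t^s)_c$ as a polynomial in the variable $y_s := d\zeta_t^s$, I would write
\begin{align*}
(n+y_s)_c=\prod_{i=0}^{c-1}(y_s+(n-i))=\sum_{k=0}^c y_s^k\, e_{c-k}(n,n-1,\ldots ,n-c+1),
\end{align*}
where $e_j$ denotes the elementary symmetric polynomial of degree $j$. This is just the standard expansion of a product of linear factors into elementary symmetric functions of the constant terms.

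Next I would average over $s=0,1,\ldots ,t-1$. Since $y_s^k=d^k\zeta_t^{sk}$ and
\begin{align*}
\frac{1}{t}\sum_{s=0}^{t-1}\zeta_t^{sk}=\begin{cases}1 & \text{if }t\mid k,\\ 0 & \text{otherwise,}\end{cases}
\end{align*}
only the indices $k=0,t,2t,\ldots$ survive. Dividing by $c!$ then yields
\begin{align*}
\frac{1}{t}\sum_{s=0}^{t-1}{n+d\zeta_t^s\choose c}=\frac{1}{c!}\sum_{k=0}^{\lfloor c/t\rfloor}d^{kt}\,e_{c-kt}(n,n-1,\ldots ,n-c+1).
\end{align*}

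The $k=0$ term contributes $\frac{1}{c!}e_c(n,n-1,\ldots ,n-c+1)=\frac{1}{c!}n(n-1)\cdots (n-c+1)={n\choose c}$, so subtracting it cancels the right-hand side of the claimed identity's left side. Rewriting each remaining elementary symmetric polynomial as a sum over strictly increasing tuples $n-c+1\leq j_1<\ldots <j_{c-kt}\leq n$ of the indices it is evaluated at gives the asserted formula.

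There is no genuine obstacle here; the work is essentially bookkeeping. The only small care needed is to make sure that (i) the identification of $\{n,n-1,\ldots ,n-c+1\}$ with the index set $\{n-c+1,\ldots ,n\}$ for the inner sum is correct, and (ii) the range of $k$ after reindexing $k\mapsto kt$ runs from $1$ to $\lfloor c/t\rfloor$, which is immediate since $k=0$ has been absorbed into the subtracted ${n\choose c}$ and $kt\leq c$ forces $k\leq\lfloor c/t\rfloor$. In particular, this computation also confirms the rationality of $\frac{1}{t}\sum_{s=0}^{t-1}{n+d\zeta_t^s\choose c}$ that is needed to make sense of the congruence \eqref{1}.
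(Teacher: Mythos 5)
Your proof is correct and is precisely the ``straightforward computation'' the paper alludes to without writing out: expanding $(n+d\zeta_t^s)_c$ into elementary symmetric polynomials of $n,n-1,\ldots,n-c+1$ and applying the root-of-unity filter $\frac{1}{t}\sum_{s=0}^{t-1}\zeta_t^{sk}=[\,t\mid k\,]$, with the $k=0$ term accounting for ${n\choose c}$. Nothing is missing; the empty-product convention handles the case $c-kt=0$, consistent with $e_0=1$.
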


At this point we can begin the proof of Theorem \ref{main}. We split the proof into several lemmas and propositions. First, we give conditions equivalent to $C_t$. These conditions will be useful in the sequel.

\begin{lem}\label{equivconds}
Let $t,d$ be positive integers. Then the following conditions are equivalent:
\begin{description}
\item[(i)] $d$ satisfies the condition $C_t$;
\item[(ii)] for each $n\in\Z$ and $c\in\{0,\ldots ,d-1\}$ the congruence
\begin{align}\label{gid}
\frac{1}{t}\sum_{s=0}^{t-1}{n+d\zeta_t^s\choose c}\equiv {n\choose c}\pmod{d}
\end{align}
is satisfied;
\item[(iii)] for each $c\in\{0,\ldots ,d-1\}$ and some fixed $n_0\in\Z$ the congruence
$$\frac{1}{t}\sum_{s=0}^{t-1}{n_0+d\zeta_t^s\choose c}\equiv {n_0\choose c}\pmod{d}$$
is satisfied.
\end{description}
\end{lem}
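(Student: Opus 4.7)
The plan is to introduce the auxiliary quantity
$$G_c(n) := \frac{1}{t}\sum_{s=0}^{t-1}\binom{n+d\zeta_t^s}{c}-\binom{n}{c},$$
which Lemma~\ref{lem1} guarantees is rational for every $c\in\N$ and $n\in\Z$. In this notation, \textbf{(i)} reads ``$G_{d-1}(n)\equiv 0\pmod d$ for every $n\in\Z$'', \textbf{(ii)} reads ``$G_c(n)\equiv 0\pmod d$ for every $n\in\Z$ and every $c\in\{0,\ldots,d-1\}$'', and \textbf{(iii)} reads ``$G_c(n_0)\equiv 0\pmod d$ for every $c\in\{0,\ldots,d-1\}$''. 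The engine driving the proof is the finite-difference recursion
$$G_c(n+1)-G_c(n)=G_{c-1}(n),\qquad G_0(n)=0,$$
which follows termwise from the Pascal-type identity $\binom{x+1}{c}-\binom{x}{c}=\binom{x}{c-1}$ applied with $x=n+d\zeta_t^s$ and $x=n$, together with the trivial identity $\frac{1}{t}\sum_{s=0}^{t-1}1-1=0$.

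The implications (ii)$\Rightarrow$(i) and (ii)$\Rightarrow$(iii) are immediate from the definitions. For (i)$\Rightarrow$(ii) I would do a downward induction on $c$: the base case $c=d-1$ is exactly (i), and the inductive step rewrites the recursion as $G_{c-1}(n)=G_c(n+1)-G_c(n)$, both terms of which are $\equiv 0\pmod d$ by the induction hypothesis applied at $n$ and $n+1$ (which is legitimate because (i) holds for \emph{every} integer~$n$).

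For (iii)$\Rightarrow$(ii) I would propagate divisibility away from $n_0$ in both directions. Forward from $n_0$: assuming $G_c(n)\equiv 0\pmod d$ simultaneously for every $c\in\{0,\ldots,d-1\}$, the identity $G_c(n+1)=G_c(n)+G_{c-1}(n)$ shows $G_c(n+1)\equiv 0\pmod d$ for every such $c$, with $c=0$ handled by $G_0\equiv 0$. Backward from $n_0$: the rearrangement $G_c(n-1)=G_c(n)-G_{c-1}(n-1)$, combined with an inner induction on $c$ whose base $G_0(n-1)=0$ is automatic, yields $G_c(n-1)\equiv 0\pmod d$ for every $c\in\{0,\ldots,d-1\}$. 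Together these two directions propagate divisibility to all of $\Z$, which is exactly~(ii).

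No genuine obstacle is expected here; the entire argument rests on spotting the finite-difference recursion, after which every implication collapses to a one-line induction on either $c$ or $n$.
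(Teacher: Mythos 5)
Your proposal is correct and follows essentially the same route as the paper: the trivial implications, a downward induction on $c$ for (i)$\Rightarrow$(ii), and a propagation from $n_0$ over $\Z$ driven by the Pascal identity $\binom{x+1}{c}=\binom{x}{c}+\binom{x}{c-1}$ for (iii)$\Rightarrow$(ii). The only cosmetic difference is that you nest the two inductions in the opposite order (outer on $n$, inner on $c$, versus the paper's outer induction on $c$ with propagation over $n$), which changes nothing of substance.
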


\begin{proof}
The implications $(ii)\Rightarrow (iii)$ and $(ii)\Rightarrow (i)$ are obvious.

The implication $(i)\Rightarrow (ii)$ follows easily from the identity ${x+1\choose c+1}={x\choose c+1}+{x\choose c}$.

The proof of implication $(iii)\Rightarrow (ii)$ is performed by induction on $c\in\{0,\ldots ,d-1\}$. First, let us note that for each $n\in\Z$ we have
$$\frac{1}{t}\sum_{s=0}^{t-1}{n+d\zeta_t^s\choose 0}\equiv {n\choose 0}\pmod{d}.$$
Assume now that for some $c\in\{0,\ldots ,d-2\}$ and each $n\in\Z$ we have
$$\frac{1}{t}\sum_{s=0}^{t-1}{n+d\zeta_t^s\choose c}\equiv {n\choose c}\pmod{d}.$$
Hence, if
$$\frac{1}{t}\sum_{s=0}^{t-1}{n+d\zeta_t^s\choose c+1}\equiv {n\choose c+1}\pmod{d},$$
then
\begin{align*}
& \frac{1}{t}\sum_{s=0}^{t-1}{n+1+d\zeta_t^s\choose c+1}=\frac{1}{t}\sum_{s=0}^{t-1}\left({n+d\zeta_t^s\choose c+1}+{n+d\zeta_t^s\choose c}\right)\\
& \equiv {n\choose c+1}+{n\choose c}={n+1\choose c+1}\pmod{d}
\end{align*}
and
\begin{align*}
& \frac{1}{t}\sum_{s=0}^{t-1}{n-1+d\zeta_t^s\choose c+1}=\frac{1}{t}\sum_{s=0}^{t-1}\left({n+d\zeta_t^s\choose c+1}-{n-1+d\zeta_t^s\choose c}\right)\\
& \equiv {n\choose c+1}-{n-1\choose c}={n-1\choose c+1}\pmod{d}.
\end{align*}
Since
$$\frac{1}{t}\sum_{s=0}^{t-1}{n_0+d\zeta_t^s\choose c+1}\equiv {n_0\choose c+1}\pmod{d},$$
we thus obtain
$$\frac{1}{t}\sum_{s=0}^{t-1}{n+d\zeta_t^s\choose c+1}\equiv {n\choose c+1}\pmod{d}$$
for every integer $n$.
\end{proof}

A direct consequence of Lemma \ref{lem1} is the following

\begin{corollary}\label{equivcond}
For each $d,c\in\N$ and $n\in\Z$, where $c<d$, the congruence (\ref{gid}) is equivalent to
\begin{align}\label{cong}
\frac{1}{c!}\sum_{k=1}^{\left\lfloor\frac{c}{t}\right\rfloor}d^{kt}\sum_{n-c+1\leq j_1<\ldots <j_{c-kt}\leq n} j_1\cdot \ldots \cdot j_{c-kt}\equiv 0\pmod{d}.
\end{align}
\end{corollary}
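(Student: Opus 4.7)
The plan is to deduce Corollary \ref{equivcond} immediately from Lemma \ref{lem1}. Recall from Section~1 that a congruence $x\equiv y\pmod{d}$ between rational numbers is defined to mean $\nu_p(x-y)\geq\nu_p(d)$ for every prime divisor $p$ of $d$, equivalently that the numerator of the irreducible fraction representing $x-y$ is divisible by $d$. In particular, (\ref{gid}) is equivalent to the assertion that
$$\frac{1}{t}\sum_{s=0}^{t-1}{n+d\zeta_t^s\choose c}-{n\choose c}\equiv 0\pmod{d},$$
which by Lemma \ref{lem1} is in turn equivalent to (\ref{cong}).

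In the boundary case $c<t$ the summation range $k=1,\ldots ,\lfloor c/t\rfloor$ is empty, so both sides of the claimed equivalence are trivially satisfied; for $c\geq t$ the argument is nothing more than a substitution of the identity of Lemma \ref{lem1} into (\ref{gid}). Accordingly there is no genuine obstacle here, and the corollary is simply a convenient reformulation of that lemma. Its value lies in the fact that the right-hand side of (\ref{cong}) is built from elementary symmetric sums of consecutive integers multiplied by powers of $d$, which are considerably easier to manipulate $p$-adically than the original binomial expression involving roots of unity, and this reformulation will guide the divisibility analysis in the subsequent sections.
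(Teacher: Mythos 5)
Your proposal is correct and matches the paper exactly: the paper gives no separate proof, stating only that the corollary is ``a direct consequence of Lemma \ref{lem1},'' which is precisely your substitution of the identity from that lemma into the definition of the congruence $x\equiv y\pmod{d}$ for rationals. Your additional remark about the empty sum when $c<t$ is a harmless (and correct) observation that the paper records separately as Corollary \ref{equal}.
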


From Corollary \ref{equivcond} we easily conclude equality in (\ref{gid}) for $c<t$.

\begin{corollary}\label{equal}
If $c<t$, then
\begin{align*}
\frac{1}{t}\sum_{s=0}^{t-1}{n+d\zeta_t^s\choose c}={n\choose c}
\end{align*}
for each $n\in\Z$. In particular, the condition $C_t$ is satisfied by each $d\in\{1,\ldots ,t\}$.
\end{corollary}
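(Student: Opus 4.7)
The plan is to read the statement off directly from Lemma \ref{lem1}, with no real computation required. Recall that Lemma \ref{lem1} gives the exact identity
\[
\frac{1}{t}\sum_{s=0}^{t-1}\binom{n+d\zeta_t^s}{c}-\binom{n}{c}=\frac{1}{c!}\sum_{k=1}^{\lfloor c/t\rfloor}d^{kt}\sum_{n-c+1\leq j_1<\ldots<j_{c-kt}\leq n} j_1\cdots j_{c-kt}.
\]
The observation is simply that the outer sum on the right-hand side runs from $k=1$ to $k=\lfloor c/t\rfloor$, and the hypothesis $c<t$ forces $\lfloor c/t\rfloor=0$. Hence this sum is empty and equals $0$, which yields the claimed equality
\[
\frac{1}{t}\sum_{s=0}^{t-1}\binom{n+d\zeta_t^s}{c}=\binom{n}{c}
\]
for every $n\in\Z$.

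For the second assertion, assume $d\in\{1,\ldots,t\}$. Then every $c\in\{0,\ldots,d-1\}$ satisfies $c\leq d-1<t$, so by what we have just shown the equality above holds for each such $c$ and each $n\in\Z$. In particular the congruence \eqref{gid} of Lemma \ref{equivconds}(ii) holds trivially, so by the equivalence $(ii)\Leftrightarrow(i)$ of Lemma \ref{equivconds} the integer $d$ satisfies the condition $C_t$.

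There is no real obstacle: the entire proof is a one-line application of Lemma \ref{lem1} (the combinatorial identity has already absorbed all the work), followed by a trivial invocation of Lemma \ref{equivconds}. The only thing to be careful about is the bookkeeping $c\leq d-1<d\leq t\Rightarrow c<t$, which is why the hypothesis in Lemma \ref{equivconds}(ii) is ``$c\in\{0,\ldots,d-1\}$'' rather than just ``$c<t$''.
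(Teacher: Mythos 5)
Your proposal is correct and is exactly the argument the paper intends: for $c<t$ the upper limit $\lfloor c/t\rfloor$ vanishes, so the right-hand side of Lemma \ref{lem1} is an empty sum, and the second assertion follows from Lemma \ref{equivconds} since $c\leq d-1<t$ for all relevant $c$. The paper states this corollary without a written proof (``From Corollary \ref{equivcond} we easily conclude\ldots''), and your write-up supplies precisely the omitted details.
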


The next two propositions allow us to claim that if $d\not\in A_t$ is not a power of a prime number or is a power of a prime number with too large an exponent, then $d$ does not satisfy $C_t$. The first proposition implies that if $d$ has a prime divisor $p$ such that $\frac{d}{p^{\nu_p(d)}}>t$, then $d$ does not satisfy $C_t$.

\begin{prop}\label{beta}
Let $d$, $\beta_p$ be positive integers such that $\nu_p(d)>t\beta_p-\nu_p(t!)$ and $d>tp^{\nu_p(d)-\beta_p}$ for some prime divisor $p$ of $d$. Then $d$ does not satisfy the condition $C_t$.
\end{prop}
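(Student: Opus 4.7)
The plan is to apply Corollary \ref{equivcond}: failure of $C_t$ is equivalent to exhibiting $n\in\Z$ and $c\in\{0,\ldots,d-1\}$ for which
$$ S(n,c) := \frac{1}{c!}\sum_{k=1}^{\lfloor c/t\rfloor} d^{kt}\, e_{c-kt}(n-c+1,\ldots,n) $$
satisfies $\nu_p(S(n,c))<\alpha$, where $\alpha:=\nu_p(d)$ and $e_m$ denotes the degree-$m$ elementary symmetric polynomial. I would take $c:=tp^{\alpha-\beta_p}$ and $n:=0$. The hypothesis $d>tp^{\alpha-\beta_p}$ gives $c<d$ immediately. At $n=0$, each $e_m(-(c-1),\ldots,-1,0)$ collapses to $(-1)^m e_m(1,2,\ldots,c-1)$ for $m\geq 1$ (subsets containing the zero entry contribute nothing), while the $m=0$ case is $1$.

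The first ingredient is the inequality $\nu_p(c!) > (t-1)\alpha$. By Legendre's formula,
$$ \nu_p(c!) = t\cdot\frac{p^{\alpha-\beta_p}-1}{p-1} + \nu_p(t!) \geq t(\alpha-\beta_p) + \nu_p(t!), $$
using $(p^k-1)/(p-1)\geq k$. Rewriting hypothesis~(1) as $(t-1)\alpha<t(\alpha-\beta_p)+\nu_p(t!)$ yields the desired bound.

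The second ingredient is to show that the $k=1$ term of $c!\,S(0,c)$, namely $(-1)^{c-t}d^t\,e_{c-t}(1,\ldots,c-1)$, is dominant $p$-adically. Its $p$-adic valuation is $t\alpha+\nu_p(e_{c-t}(1,\ldots,c-1))$; I need this to be strictly smaller than $\alpha+\nu_p(c!)$, which combined with the first ingredient will force $\nu_p(S(0,c))<\alpha$. Using the identity
$$ e_{c-t}(1,\ldots,c-1) = (c-1)!\cdot e_{t-1}\bigl(1,1/2,\ldots,1/(c-1)\bigr), $$
the analysis reduces to bounding the $p$-adic valuation of the higher harmonic-type sum $e_{t-1}(1,1/2,\ldots,1/(c-1))$. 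For $t=2$ this is the classical estimate on $\nu_p(H_n)$ governed by the unique top $p$-power divisor of $\{1,\ldots,n\}$; for $t\geq 3$ analogous bounds on iterated reciprocal sums suffice.

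The final step is to rule out cancellation: for every $k\geq 2$, one must check
$$ (k-1)t\alpha + \nu_p\bigl(e_{c-kt}(1,\ldots,c-1)\bigr) > \nu_p\bigl(e_{c-t}(1,\ldots,c-1)\bigr), $$
so that the $k=1$ contribution strictly minimizes the $p$-adic valuation. Expressing each $e_{c-kt}(1,\ldots,c-1)$ via the same identity as $(c-1)!\cdot e_{kt-1}(1,1/2,\ldots,1/(c-1))$ reduces the dominance check to a uniform comparison of the $p$-adic valuations of the higher harmonic-type sums, where the extra mass $(k-1)t\alpha$ supplied by $d^{(k-1)t}$ absorbs any bounded negative contribution. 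The main technical obstacle is precisely this uniform $p$-adic control over $e_{m}(1,1/2,\ldots,1/(c-1))$ for $m=t-1,2t-1,\ldots$; once it is in hand, the conclusion $\nu_p(S(0,c))<\alpha$, and hence failure of $C_t$, follows.
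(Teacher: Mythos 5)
Your reduction via Corollary \ref{equivcond} and your choice $c=tp^{\nu_p(d)-\beta_p}$ match the paper (which takes $n=-1$ rather than $n=0$, an inessential shift), but there is a genuine gap at the heart of the argument: you never actually determine the $p$-adic valuation of $e_{t-1}\bigl(1,1/2,\ldots,1/(c-1)\bigr)$, nor of its higher analogues for $k\geq 2$, and you explicitly defer this as ``the main technical obstacle.'' The appeal to ``classical estimates on $\nu_p(H_n)$'' does not close it: $p$-adic valuations of harmonic-type sums are irregular in general (cancellation can and does push the valuation up), so no off-the-shelf uniform bound of the kind you invoke exists. Without pinning down these valuations exactly, neither your ``second ingredient'' nor the no-cancellation check for $k\geq 2$ can be completed; your ``first ingredient'' $\nu_p(c!)>(t-1)\alpha$ is correct but is too weak on its own to force $\nu_p(S(0,c))<\alpha$.

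What rescues the argument --- and what the paper uses --- is that the specific choice of $c$ makes the minimal summand \emph{unique}, so no cancellation is possible and no estimate on reciprocal sums is needed. Working with $n=-1$, each term of the sum is $(-1)^c d^{kt}/(i_1\cdots i_{kt})$ with $\{i_1,\ldots,i_{kt}\}\subset\{-tp^{\nu_p(d)-\beta_p},\ldots,-1\}$, and this interval contains \emph{exactly} $t$ multiples of $p^{\nu_p(d)-\beta_p}$, namely $-mp^{\nu_p(d)-\beta_p}$ for $m=1,\ldots,t$. Hence for $k=1$ there is exactly one subset realizing the maximal value of $\sum_l\nu_p(i_l)$, giving a unique summand of valuation $t\nu_p(d)-t(\nu_p(d)-\beta_p)-\nu_p(t!)=t\beta_p-\nu_p(t!)<\nu_p(d)$; every index $i$ that is not one of these multiples satisfies $\nu_p(d/i)\geq\beta_p+1>\nu_p\bigl(d/(-mp^{\nu_p(d)-\beta_p})\bigr)$, so every other summand (including all those with $k\geq 2$) has strictly larger valuation. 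The same uniqueness phenomenon occurs in your $n=0$ normalization (there are exactly $t-1$ multiples of $p^{\nu_p(d)-\beta_p}$ in $\{1,\ldots,c-1\}$, matching the subset size $t-1$ in $e_{t-1}$), so your route can be repaired, but the uniqueness observation is the missing idea, and until it is supplied the proof is incomplete.
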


\begin{proof}
By Lemma \ref{equivconds} and Corollary \ref{equivcond}, it suffices to show that (\ref{cong}) does not hold for some $c<d$ and $n\in\Z$. We put $c=tp^{\nu_p(d)-\beta_p}$ and $n=-1$. Then (\ref{cong}) takes the form
\begin{equation}\label{c1}
\begin{split}
& \frac{1}{(tp^{\nu_p(d)-\beta_d})!}\sum_{k=1}^{p^{\nu_p(d)-\beta_p}}d^{kt}\\
& \sum_{-tp^{\nu_p(d)-\beta_p}\leq j_1<\ldots <j_{tp^{\nu_p(d)-\beta_p}-kt}\leq -1} j_1\cdot \ldots \cdot j_{tp^{\nu_p(d)-\beta_p}-kt}\\
& =(-1)^{p^{\nu_p(d)-\beta_d}t}\sum_{k=1}^{p^{\nu_p(d)-\beta_p}}\sum_{-tp^{\nu_p(d)-\beta_p}\leq i_1<\ldots <i_{kt}\leq -1} \frac{d^{kt}}{i_1\cdot \ldots \cdot i_{kt}}\equiv 0\pmod{d}.
\end{split}
\end{equation}
The summand on the left hand side with the least $p$-adic valuation is equal to $$(-1)^{tp^{\nu_p(d)}}\frac{d^{t}}{\prod_{l=1}^t ((l-t-1)p^{\nu_p(d)-\beta_p})},$$ obtained for $k=1$ and $i_l=(l-t-1)p^{\nu_p(d)-\beta_p}$, where $l\in\{1,\ldots ,t\}$. Any other summand has strictly greater $p$-adic valuation because any quotient of the form $\frac{d}{i_l}$, where $-tp^{\nu_p(d)-\beta_p}\leq i_l\leq -1$ and $i_l\neq -mp^{\nu_p(d)-\beta_p}$ for any $m\in\{1,\ldots ,t\}$, has the $p$-adic valuation greater than the $p$-adic valuation of $\frac{d}{-mp^{\nu_p(d)-\beta_p}}$. Hence, the left hand side of (\ref{c1}) has the $p$-adic valuation equal to
\begin{align*}
\nu_p\left(\frac{d^{t}}{\prod_{l=1}^t ((t-l+1)p^{\nu_p(d)-\beta_p})}\right)& =t\nu_p(d)-t(\nu_p(d)-\beta_p)-\nu_p(t!)\\
& =t\beta_p-\nu_p(t!)<\nu_p(d).
\end{align*}
This means that the left hand side of (\ref{c1}) cannot be congruent to $0$ modulo $d$.
\end{proof}

In particular, we get the following.

\begin{corollary}\label{toolargevalue}
Let $d$ be a positive integer such that $d>tp^{\nu_p(d)}$ for some prime divisor $p$ of $d$. Then $d$ does not satisfy the condition $C_t$.
\end{corollary}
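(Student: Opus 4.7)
The plan is to run the proof of Proposition \ref{beta} at the boundary value ``$\beta_p=0$''. That proposition is formally stated only for $\beta_p\in\N_+$, but its entire argument goes through verbatim in the degenerate case $\beta_p=0$ once the hypothesis $d>tp^{\nu_p(d)}$ is in force; this is the content of the corollary.

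In detail, by Lemma \ref{equivconds} together with Corollary \ref{equivcond}, it suffices to exhibit $c\in\{0,\ldots,d-1\}$ and $n\in\Z$ for which (\ref{cong}) fails. I would take
$$c=tp^{\nu_p(d)},\qquad n=-1,$$
and note that $c<d$ by hypothesis. Rewriting the left-hand side of (\ref{cong}) exactly as in (\ref{c1}) from the proof of Proposition \ref{beta} yields
$$(-1)^{c}\sum_{k=1}^{p^{\nu_p(d)}}\ \sum_{-c\le i_1<\cdots<i_{kt}\le -1}\frac{d^{kt}}{i_1\cdots i_{kt}}.$$

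The main step is the $p$-adic comparison of the summands, which proceeds exactly as in Proposition \ref{beta}. Viewing each summand as the product $\prod_l(d/i_l)$, a factor $d/i_l$ with $i_l=-mp^{\nu_p(d)}$ for some $m\in\{1,\ldots,t\}$ has $p$-adic valuation $-\nu_p(m)\le 0$, whereas any other factor has $p$-adic valuation strictly greater than $0$. Consequently the unique summand of minimal $p$-adic valuation is obtained at $k=1$ by choosing $i_l=-lp^{\nu_p(d)}$ for $l=1,\ldots,t$; its value equals $(-1)^t d^t/(t!\,p^{t\nu_p(d)})$, with $p$-adic valuation exactly $-\nu_p(t!)$. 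Since $-\nu_p(t!)\le 0<\nu_p(d)$, the full sum has $p$-adic valuation strictly smaller than $\nu_p(d)$, and hence cannot be $\equiv 0\pmod d$.

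I expect no serious obstacle once Proposition \ref{beta} is available. The two points deserving a second look are the uniqueness of the minimizer when $p\le t$ (so some of the $-\nu_p(m)$ are strictly negative) and the verification that every summand with $k\ge 2$ has strictly larger $p$-adic valuation than the $k=1$ extremal choice; both are handled by the same counting used in the original proof, since a summand with $k\ge 2$ necessarily contains at least $(k-1)t\ge t$ ``non-special'' factors, each of which contributes a strictly positive amount to the $p$-adic valuation.
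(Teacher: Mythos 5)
Your proposal is correct and follows essentially the same route as the paper, which proves the corollary by invoking Proposition \ref{beta} with $\beta_p=0$ (checking $\nu_p(d)>0\geq -\nu_p(t!)$ and $d>tp^{\nu_p(d)}$); you merely spell out why the proof of that proposition still works at this boundary value of $\beta_p$, which the paper takes for granted. The valuation computations you give (minimal summand at $k=1$ with $i_l=-lp^{\nu_p(d)}$, valuation $-\nu_p(t!)<\nu_p(d)$) match the paper's argument.
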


\begin{proof}
We take $\beta_p=0$ and we see that $\nu_p(d)>0\geq-\nu_p(t!)$. Thus we can apply the previous proposition.
\end{proof}

The next proposition states that if $\nu_p(d)$ is too large, then $C_t$ is not satisfied.

\begin{prop}\label{toolargevaluation}
Let $d$ be a positive integer such that $\nu_p(d)>(\alpha_p+1)t-\nu_p(t!)$ for some prime number $p$, where $\alpha_p=\lfloor\log_p t\rfloor$. Then $d$ does not satisfy the condition $C_t$.
\end{prop}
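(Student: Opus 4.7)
The plan is to deduce Proposition \ref{toolargevaluation} directly from Proposition \ref{beta} by choosing $\beta_p = \alpha_p + 1$. Since $\alpha_p = \lfloor \log_p t \rfloor \geq 0$, this $\beta_p$ is a positive integer, as required by Proposition \ref{beta}.

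With this choice, the first hypothesis of Proposition \ref{beta}, namely $\nu_p(d) > t\beta_p - \nu_p(t!)$, becomes exactly $\nu_p(d) > (\alpha_p+1)t - \nu_p(t!)$, which is the hypothesis of the present proposition, so it needs no further verification. For the second hypothesis, $d > tp^{\nu_p(d) - \beta_p}$, the plan is to write $d = p^{\nu_p(d)} m$ with $\gcd(m,p) = 1$ and $m \in \N_+$; the inequality then reduces to $m p^{\alpha_p + 1} > t$. By the very definition of $\alpha_p = \lfloor \log_p t \rfloor$, one has $p^{\alpha_p + 1} > t$ (whether or not $t$ is itself a power of $p$), and since $m \geq 1$ the inequality holds.

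The choice $\beta_p = \alpha_p + 1$ is essentially forced: it is the smallest positive integer with $p^{\beta_p} > t$, which is precisely what makes the second hypothesis of Proposition \ref{beta} available at no cost. As a bookkeeping remark, the assumption $\nu_p(d) > (\alpha_p+1)t - \nu_p(t!)$ together with the standard bound $\nu_p(t!) \leq (t-1)/(p-1)$ yields $\nu_p(d) \geq \beta_p$, so that $c = t p^{\nu_p(d) - \beta_p}$ is a positive integer strictly smaller than $d$, confirming that Proposition \ref{beta} is genuinely applicable. No real obstacle is anticipated: the technical content of the proof is already carried out in Proposition \ref{beta}, and the present proposition is essentially an optimisation of the choice of $\beta_p$.
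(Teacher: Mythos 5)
Your reduction is correct: with $\beta_p=\alpha_p+1$ the first hypothesis of Proposition \ref{beta} becomes literally the hypothesis of Proposition \ref{toolargevaluation}, and the second hypothesis $d>tp^{\nu_p(d)-\alpha_p-1}$ follows from $p^{\alpha_p+1}>t$ (which is immediate from $\alpha_p=\lfloor\log_pt\rfloor$) exactly as you argue. The paper, however, does not make this reduction: it proves Proposition \ref{toolargevaluation} by repeating, essentially verbatim with $\beta_p$ replaced by $\alpha_p+1$, the computation from the proof of Proposition \ref{beta} (take $c=tp^{\nu_p(d)-\alpha_p-1}$, $n=-1$, and identify the unique summand of minimal $p$-adic valuation $t(\alpha_p+1)-\nu_p(t!)<\nu_p(d)$). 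So the underlying mathematics is identical, but your version is more economical and makes explicit that the second proposition is simply the case $\beta_p=\alpha_p+1$ of the first, chosen as the smallest $\beta_p$ for which the inequality $d>tp^{\nu_p(d)-\beta_p}$ holds automatically. One small point worth writing out: the statement only says ``for some prime number $p$,'' while Proposition \ref{beta} requires $p$ to divide $d$; this is covered because $(\alpha_p+1)t-\nu_p(t!)\geq t-(t-1)=1$, so the hypothesis forces $\nu_p(d)\geq 2>0$, and the same bound gives $\nu_p(d)>\alpha_p+1=\beta_p$, so $c=tp^{\nu_p(d)-\beta_p}$ is a genuine positive integer, as your bookkeeping remark indicates.
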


\begin{proof}
By Lemma \ref{equivconds} and Corollary \ref{equivcond}, it suffices to show that (\ref{cong}) does not hold for some $c<d$ and $n\in\Z$. We take $c=tp^{\nu_p(d)-\alpha_p-1}$ and $n=-1$ in (\ref{cong}) and check the validity of the following congruence:
\begin{equation}\label{c3}
\begin{split}
& \frac{1}{(tp^{\nu_p(d)-\alpha_p-1})!}\sum_{k=1}^{p^{\nu_p(d)-\alpha_p-1}}d^{kt}\\
& \sum_{-tp^{\nu_p(d)-\alpha_p-1}\leq j_1<\ldots <j_{tp^{\nu_p(d)-\alpha_p-1}-kt}\leq -1} j_1\cdot \ldots \cdot j_{tp^{\nu_p(d)-\alpha_p-1}-kt}\\
& =(-1)^{tp^{\nu_p(d)-\alpha_p-1}}\sum_{k=1}^{p^{\nu_p(d)-\alpha_p-1}}\sum_{-tp^{\nu_p(d)-\alpha_p-1}\leq i_1<\ldots <i_{kt}\leq -1} \frac{d^{kt}}{i_1\cdot \ldots \cdot i_{kt}}\\
& \equiv 0\pmod{d}.
\end{split}
\end{equation}
The summand on the left hand side with the least $p$-adic valuation is equal to $$\frac{d^t}{\prod_{l=1}^t (l-t-1)p^{\nu_p(d)-\alpha_p-1}},$$ obtained for $k=1$ and $i_l=(l-t-1)p^{\nu_p(d)-\alpha_p-1}$, where $l\in\{1,\ldots ,t\}$. Any other summand has strictly greater $p$-adic valuation because any quotient of the form $\frac{d}{i_l}$, where $-tp^{\nu_p(d)-\alpha_p-1}\leq i_l\leq -1$ and $i_l\neq -mp^{\nu_p(d)-\alpha_p-1}$ for any $m\in\{1,\ldots ,t\}$, has the $p$-adic valuation greater than the $p$-adic valuation of $\frac{d}{-mp^{\nu_p(d)-\beta_p}}$. Hence, the left hand side of (\ref{c3}) has the $p$-adic valuation equal to
\begin{align*}
\nu_p\left(\frac{d^{t}}{\prod_{l=1}^t ((t-l+1)p^{\nu_p(d)-\alpha_p-1})}\right)& =t\nu_p(d)-t(\nu_p(d)-\alpha_p-1)-\nu_p(t!)\\
& =t(\alpha_p+1)-\nu_p(t!)<\nu_p(d),
\end{align*}
which means that it cannot be congruent to $0$ modulo $d$.
\end{proof}

The next proposition concerns a particular case of the values of $t$ and $d$.

\begin{prop}\label{particular}
Let $t=qp^u$, $d=qp^r$, where $p$ is an odd prime number and $q,u,r$ are positive integers such that $p\nmid q$ and $r>ut+1-\nu_p((t-1)!)$. Then $d$ does not satisfy the condition $C_t$.
\end{prop}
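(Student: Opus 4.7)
By Lemma \ref{equivconds} and Corollary \ref{equivcond} it suffices to exhibit $c<d$ making (\ref{cong}) fail with $n=-1$. Following the scheme of Propositions \ref{beta} and \ref{toolargevaluation}, I would rewrite (\ref{cong}) as
\[(-1)^c\sum_{k=1}^{\lfloor c/t\rfloor}\,\sum_{-c\leq i_1<\ldots <i_{kt}\leq -1}\frac{d^{kt}}{i_1\cdots i_{kt}}\equiv 0\pmod d\]
and estimate the $p$-adic valuation of the left-hand side for the specific choice
\[c=qp^r-2p^{r-u-1}.\]
The goal is to show this valuation equals $ut+1-\nu_p((t-1)!)$, which is less than $r=\nu_p(d)$ by hypothesis, so that the congruence fails.

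A counting argument entirely analogous to those of Propositions \ref{beta} and \ref{toolargevaluation} shows that for $k=1$ the minimum $p$-adic valuation of a single term equals $ut+1-\nu_p((t-1)!)$, attained exactly when $\{i_1,\ldots ,i_t\}=\{-p^{r-u},-2p^{r-u},\ldots ,-(t-1)p^{r-u}\}\cup\{-mp^{r-u-1}\}$ for some
\[m\in M'_p:=\{m\in\N_+:m\leq qp^{u+1}-2,\ p\nmid m\},\]
while an easy extension of the same counting shows that for $k\geq 2$ every term has strictly larger $p$-adic valuation. The proof therefore reduces to showing that the sum over the minimum-valuation terms has $p$-adic valuation exactly $ut+1-\nu_p((t-1)!)$, and is not lifted higher by some cancellation.

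That sum factors as $p^{ut+1-\nu_p((t-1)!)}$ times a $p$-adic unit times $\sum_{m\in M'_p}1/m$, so the main step---and the reason for the precise choice of $c$---is to prove $\sum_{m\in M'_p}1/m\not\equiv 0\pmod p$. For this I would enlarge $M'_p$ to $M_p:=M'_p\cup\{qp^{u+1}-1\}$, on which the involution $m\leftrightarrow qp^{u+1}-m$ has no fixed points (any fixed point would satisfy $2m=qp^{u+1}$, forcing $p\mid m$), yielding
\[\sum_{m\in M_p}\frac{1}{m}=qp^{u+1}\sum_{\text{pairs}}\frac{1}{m(qp^{u+1}-m)},\]
so that $\nu_p\bigl(\sum_{m\in M_p}1/m\bigr)\geq u+1\geq 2$. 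Since the discarded term $1/(qp^{u+1}-1)$ is congruent to $-1$ modulo $p$, subtracting yields $\sum_{m\in M'_p}1/m\equiv 1\pmod p$. The hard point is precisely this valuation estimate: the simpler choice $c=qp^r-1$ keeps the full involution-symmetric $M_p$ and raises the valuation by $\geq u+1$, pushing the overall valuation up to at least $r$ and spoiling the inequality; removing one element from $M_p$ breaks the symmetry without affecting the minimum-summand valuation.
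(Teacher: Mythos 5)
Your proof is correct, and it follows the same overall strategy as the paper's: take $n=-1$, choose $c$ so that the $k=1$ summands of minimal $p$-adic valuation are exactly those with $\{i_1,\ldots,i_t\}=\{-p^{r-u},\ldots,-(t-1)p^{r-u}\}\cup\{-mp^{r-u-1}\}$, $p\nmid m$, verify that every other summand (in particular all of those with $k\geq 2$) has strictly larger valuation, and reduce everything to showing that a truncated harmonic sum $\sum 1/m$ over a reduced-residue range is a $p$-adic unit, so that the left-hand side of \eqref{cong} has valuation $ut+1-\nu_p((t-1)!)<r=\nu_p(d)$. The two concrete differences are in the choice of $c$ and in how that unit sum is evaluated. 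The paper takes $c=(t-1)p^{r-u}+p^{r-u-1}$, so the index $m$ runs over $\{1,\ldots,p(t-1)+1\}\setminus p\Z$ (one element past $t-1$ complete reduced residue blocks), and it computes $\sum 1/m\equiv 1+(t-1)\frac{(p-1)p}{2}\equiv 1\pmod p$ by summing block by block; you take $c=d-2p^{r-u-1}=tp^{r-u}-2p^{r-u-1}$, so $m$ runs over $\{1,\ldots,tp-2\}\setminus p\Z$ (one element short of $t$ complete blocks), and you evaluate the sum via the fixed-point-free reflection $m\mapsto tp-m$. Both routes use the oddness of $p$ in an essential way (yours through the absence of fixed points, the paper's through $p\mid\frac{(p-1)p}{2}$) and are of comparable length. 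One small caveat: your closing motivational remark that the choice $c=qp^r-1$ would push the \emph{overall} valuation ``up to at least $r$'' overstates things --- it would raise the valuation of the sum of the would-be-minimal summands by at least $u+1$, after which the left-hand side is controlled by the next tier of summands, not necessarily by anything as large as $r$; but this remark is not used in the argument, so the proof stands.
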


\begin{proof}
By Lemma \ref{equivconds} and Corollary \ref{equivcond}, it suffices to show that (\ref{cong}) does not hold for some $c<d$ and $n\in\Z$. We take $c=(t-1)p^{r-u}+p^{r-u-1}$ and $n=-1$ in (\ref{cong}) and check the validity of the following congruence:
\begin{equation}\label{c5}
\begin{aligned}
& \frac{1}{((t-1)p^{r-u}+p^{r-u-1})!}\sum_{k=1}^{\left\lfloor\frac{(t-1)p^{r-u}+p^{r-u-1}}{t}\right\rfloor}q^{kt}p^{rkt}\\
& \sum_{-(t-1)p^{r-u}-p^{r-u-1}\leq j_1<\ldots <j_{(t-1)p^{r-u}+p^{r-u-1}-kt}\leq -1} j_1\cdot \ldots \cdot j_{(t-1)p^{r-u}+p^{r-u-1}-kt}\\
& =(-1)^{(t-1)p^{r-u}+p^{r-u-1}}\sum_{k=1}^{\left\lfloor\frac{(t-1)p^{r-u}+p^{r-u-1}}{t}\right\rfloor}\\
& \sum_{-(t-1)p^{r-u}-p^{r-u-1}\leq i_1<\ldots <i_{kt}\leq -1} \frac{q^{kt}p^{rkt}}{i_1\cdot \ldots \cdot i_{kt}}\\
& \equiv 0\pmod{qp^r}.
\end{aligned}
\end{equation}
The summands on the left hand side with the least $p$-adic valuation are of the form $$\frac{q^{t}p^{rt}}{-p^{r-u-1}a\prod_{l=1}^{t-1} (-lp^{r-u})},$$ where $a\in\{1,\ldots ,p(t-1)+1\}$ and $p\nmid a$. Their sum is equal to
\begin{equation*}
\begin{aligned}
& \sum_{a=1, p\nmid a}^{p(t-1)+1}\frac{q^{t}p^{rt}}{-p^{r-u-1}a\prod_{l=1}^{t-1} -lp^{r-u}}=\frac{(-q)^tp^{tu+1}}{(t-1)!}\sum_{a=1, p\nmid a}^{p(t-1)+1}\frac{1}{a}\\
& \equiv\frac{(-q)^tp^{tu+1}}{(t-1)!}\left(1+(t-1)\sum_{b=1}^{p-1}b\right)\equiv\frac{(-q)^tp^{tu+1}}{(t-1)!}\left(1+\frac{(t-1)(p-1)p}{2}\right)\\
& \equiv\frac{(-q)^tp^{tu+1}}{(t-1)!}\pmod{p^{tu+2-\nu_p((t-1)!)}}
\end{aligned}
\end{equation*}
and hence its $p$-adic valuation is equal to $ut+1-\nu_p((t-1)!)$. Any other summand has $p$-adic valuation greater than $ut+1-\nu_p((t-1)!)$. Thus, the left hand side of (\ref{c5}) has $p$-adic valuation $tu+1-\nu_p((t-1)!)<r$, which means that this congruence does not hold.
\end{proof}

As an immediate consequence of the last proposition, we obtain the following.

\begin{corollary}\label{powernot}
Let $t=p^u$, $d=p^r$, where $p$ is a prime number and $u,r$ are positive integers such that $r>ut+1-\nu_p((t-1)!)$. Then the condition $C_t$ does not hold.
\end{corollary}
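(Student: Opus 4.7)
The plan is to simply specialize Proposition \ref{particular} to the parameter choice $q = 1$. With this substitution, the hypotheses $t = qp^u$ and $d = qp^r$ reduce to $t = p^u$ and $d = p^r$, matching the setup of the corollary exactly, while the coprimality condition $p \nmid q$ becomes $p \nmid 1$, which is trivially satisfied. The numerical bound $r > ut + 1 - \nu_p((t-1)!)$ already appears verbatim in both statements. So for every odd prime $p$, the conclusion of Proposition \ref{particular} gives the conclusion of the corollary immediately; no further computation is required.

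The only point that merits separate attention is the case $p = 2$, which Proposition \ref{particular} excludes. Here two sub-cases arise. When $u = 1$, so $t = 2$, a direct calculation shows that the threshold $ut + 1 - \nu_p((t-1)!) = 3$ coincides with $(\alpha_p + 1) t - \nu_p(t!) = 3$, so Proposition \ref{toolargevaluation} already handles this range of $r$. For $u \geq 2$ and $p = 2$, the threshold in the corollary is strictly smaller (by $2^u - u - 1$) than the threshold in Proposition \ref{toolargevaluation}, so neither proposition applies as stated.

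The main obstacle in that remaining sub-case is that the proof technique of Proposition \ref{particular} genuinely fails at $p = 2$: the key leading term is controlled by
\[
1 + (t-1)\sum_{b=1}^{p-1} b = 1 + (t-1)\tfrac{(p-1)p}{2},
\]
which for odd $p$ reduces mod $p$ to $1$, but for $p = 2$ equals $t = 2^u$, vanishing modulo every $2^j$ up to $j = u$. To recover the corollary at $p = 2$ one would have to carry out a finer $2$-adic analysis, expanding $\sum_{a = 1,\, 2 \nmid a}^{2(t-1)+1} 1/a$ to higher order and tracking the next nonzero contribution; alternatively, one may simply read the corollary as implicitly restricted to odd primes, in which case the proof is the one-line specialization $q = 1$ of Proposition \ref{particular}.
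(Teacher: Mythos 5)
Your proof is essentially the paper's own: the paper derives Corollary \ref{powernot} simply as ``an immediate consequence of the last proposition,'' i.e.\ the specialization $q=1$ of Proposition \ref{particular}, exactly as in your first paragraph. Your additional observations about $p=2$ are correct and expose a real defect in the paper's statement rather than in your argument: Proposition \ref{particular} is stated only for odd $p$, so the claimed ``immediate consequence'' does not cover $p=2$, and your computation that for $u\geq 2$ the corollary's threshold $ut+1-\nu_2((t-1)!)=(u-1)2^u+u+2$ sits strictly below the threshold $u2^u+1$ of Proposition \ref{toolargevaluation} (by $2^u-u-1$) is accurate, as is your repair of the case $u=1$ via Proposition \ref{toolargevaluation}. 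The paper's own subsequent usage confirms your reading: in the proof of Theorem \ref{main} the corollary is invoked only under the explicit hypothesis that $p$ is odd, and for $t=4$ the value $d=512=2^9$ (which lies exactly in the uncovered range $8<r\leq 9$) is handled by a separate ad hoc computation in the examples section. You have correctly identified the failure point, namely that $1+(t-1)\sum_{b=1}^{p-1}b$ is a unit mod $p$ only for odd $p$, while for $p=2$ it equals $t=2^u$ and the leading term degenerates. So: for odd $p$ your proof coincides with the paper's; for $p=2$, $u\geq 2$ neither you nor the paper supplies a proof, and the honest conclusion is that the corollary should be read (as the paper itself implicitly does) with ``odd prime'' in place of ``prime.''
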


The following proposition gives a sufficient condition for $d$ to satisfy the condition $C_t$.

\begin{prop}\label{suffcond}
Let 
$$\nu_p(d)\leq (\nu_p(d)-\gamma_p(d)+\alpha_p+1)k_p(d)t-\sum_{g=0}^{\alpha_p}\left\lfloor\frac{d-1}{p^{\gamma_p(d)-g}}\right\rfloor$$
for each prime divisor $p$ of $d$, where 
\begin{align*}
k_p(d)=&\max\left\{1,\left\lceil\frac{1}{t}\left\lfloor\frac{d-1}{p^{\nu_p(d)+1}}\right\rfloor\right\rceil\right\},\\
\alpha_p=&\left\lfloor\log_p \frac{k_p(d)tp^{\gamma_p(d)}}{d}\right\rfloor ,\\
\gamma_p(d)=&\lfloor\log_p d\rfloor.
\end{align*}
Then $d$ satisfies the condition $C_t$.
\end{prop}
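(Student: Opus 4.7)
The plan is to use Lemma~\ref{equivconds}(iii) with $n_0=-1$ together with Corollary~\ref{equivcond} to reduce the statement that $d$ satisfies $C_t$ to showing
$$\Sigma_c\ :=\ (-1)^c\sum_{k=1}^{\lfloor c/t\rfloor}(-d)^{kt}\sum_{\substack{T\subseteq\{1,\ldots,c\}\\ |T|=kt}}\prod_{i\in T}\frac{1}{i}\ \equiv\ 0\pmod{d}$$
for every $c\in\{0,1,\ldots,d-1\}$, this form of the left-hand side being obtained from the substitution $j_i=-i$ in Corollary~\ref{equivcond}. The range $c<t$ is handled by Corollary~\ref{equal}; in the remaining range I fix a prime $p\mid d$ and aim to show $\nu_p(\Sigma_c)\ge\nu_p(d)$.

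The key step is a term-by-term valuation bound. For each admissible $k$,
$$\nu_p\!\left((-d)^{kt}\sum_{|T|=kt}\prod_{i\in T}\frac1i\right)\ \ge\ kt\,\nu_p(d)\ -\ \max_{\substack{T\subseteq\{1,\ldots,c\}\\ |T|=kt}}\sum_{i\in T}\nu_p(i),$$
and a standard layer-counting argument identifies the maximum on the right as $\sum_{g\ge 1}\min(kt,\lfloor c/p^g\rfloor)$, bounded by $\sum_{g\ge 1}\min(kt,M_g)$ via $c\le d-1$, where $M_g:=\lfloor(d-1)/p^g\rfloor$. Setting
$$F(m)\ :=\ m\,\nu_p(d)\ -\ \sum_{g\ge 1}\min(m,M_g),$$
each $k$-summand of $\Sigma_c$ has $p$-adic valuation at least $F(kt)$.

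I then analyse $F$. Convexity of $F$ follows from concavity of $m\mapsto\sum_g\min(m,M_g)$, and the first difference $F(m)-F(m-1)=\nu_p(d)-|\{g\ge 1:M_g\ge m\}|$ is $\le-1$ for $m\le M_{\nu_p(d)+1}$ and $\ge 0$ for $m>M_{\nu_p(d)+1}$. Consequently $F$ restricted to positive multiples of $t$ is minimized at $m=k_p(d)t$, the smallest such multiple at or beyond $M_{\nu_p(d)+1}$, matching the proposition's $k_p(d)$; in particular $F(kt)\ge F(k_p(d)t)$ for all $k\ge 1$. Now let $g_0$ be the least $g$ with $M_g\le k_p(d)t$; then
$$F(k_p(d)t)\ =\ (\nu_p(d)-g_0+1)\,k_p(d)t-\sum_{g=g_0}^{\gamma_p(d)}M_g.$$
The defining inequality $p^{\alpha_p}\le k_p(d)t\,p^{\gamma_p(d)}/d$ rewrites as $k_p(d)t\ge d/p^{\gamma_p(d)-\alpha_p}>M_{\gamma_p(d)-\alpha_p}$, which forces $g_0\le\gamma_p(d)-\alpha_p$. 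A short rearrangement (using $M_g=0$ for $g>\gamma_p(d)$, and noting that each extra term $k_p(d)t-M_{\gamma_p(d)-h}$ that appears when $g_0<\gamma_p(d)-\alpha_p$ is nonnegative) yields
$$F(k_p(d)t)\ \ge\ (\nu_p(d)-\gamma_p(d)+\alpha_p+1)\,k_p(d)t-\sum_{h=0}^{\alpha_p}M_{\gamma_p(d)-h},$$
which is $\ge\nu_p(d)$ by hypothesis.

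Combining, every summand of $\Sigma_c$ has $p$-adic valuation $\ge\nu_p(d)$, so $\nu_p(\Sigma_c)\ge\nu_p(d)$; running the argument over all primes $p\mid d$ gives $\Sigma_c\equiv 0\pmod d$ for all $c<d$, and Lemma~\ref{equivconds} delivers $C_t$. The main obstacle is the combined analysis at $m=k_p(d)t$: pinning it as the minimizer of $F$ among positive multiples of $t$ via the sign change of the discrete derivative, and matching the resulting value of $F(k_p(d)t)$ to the explicit hypothesis through the $\alpha_p$-driven inequality $g_0\le\gamma_p(d)-\alpha_p$.
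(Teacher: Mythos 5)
Your proposal follows essentially the same route as the paper's proof: reduce to the single base point $n_0=-1$ via Lemma~\ref{equivconds} and Corollary~\ref{equivcond}, dispose of $c<t$ by Corollary~\ref{equal}, and bound the $p$-adic valuation of every summand from below by counting how many of $1,\ldots,c$ lie in each valuation layer, with the critical index $k_p(d)$ and the quantities $\alpha_p$, $\gamma_p(d)$ entering exactly as in the paper's estimate before being matched to the hypothesis. Your auxiliary function $F$ and its discrete derivative are just a more explicit packaging of the paper's greedy identification of the minimal-valuation summand, so this is the same argument.
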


\begin{proof}
By Lemma \ref{equivconds}, it suffices to check the validity of the congruences
\begin{align}\label{-1}
\frac{1}{t}\sum_{s=0}^{t-1}{-1+d\zeta_t^s\choose c}\equiv {-1\choose c}\pmod{p^{\nu_p(d)}}
\end{align}
for each $c\in\{0,\ldots ,d-1\}$ and prime number $p$ dividing $d$. From Corollary \ref{equal} we know that (\ref{-1}) is true for $c<t$. We are left with the proof of (\ref{-1}) for $c\in\{t,\ldots ,d-1\}$. By Corollary \ref{equivcond} the congruence (\ref{-1}) is equivalent to the following one:
\begin{align}\label{-1m}
(-1)^c\sum_{k=1}^{\left\lfloor\frac{c}{t}\right\rfloor}\sum_{-c\leq i_1<\ldots <i_{kt}\leq -1} \frac{d^{kt}}{i_1\cdot \ldots \cdot i_{kt}}\equiv 0\pmod{p^{\nu_p(d)}}.
\end{align}
A summand of the left hand side has the least $p$-adic valuation if the set $\{i_1,\ldots ,i_{kt}\}$ contains all the values with $p$-adic valuation greater than $\nu_p(d)$ and as few values with $p$-adic valuation less than $\nu_p(d)$ as possible. Hence, the value of $k$ for this summand is the least positive integer such that $kt\geq\lfloor\frac{d-1}{p^{\nu_p(d)+1}}\rfloor$. This means that $$k=k_p(d)=\max\left\{1,\left\lceil\frac{1}{t}\left\lfloor\frac{d-1}{p^{\nu_p(d)+1}}\right\rfloor\right\rceil\right\}.$$ Since $\{i_1,\ldots ,i_{kt}\}\subset\{1-d,\ldots ,-1\}$, the minimal $p$-adic valuation is attained, when $\{i_1,\ldots ,i_{kt}\}$ contains $\left\lfloor\frac{d-1}{p^{\gamma_p(d)}}\right\rfloor$ values with $p$-adic valuation $\gamma_p(d)$, $\left\lfloor\frac{d-1}{p^{\gamma_p(d)-h}}\right\rfloor-\left\lfloor\frac{d-1}{p^{\gamma_p(d)-h+1}}\right\rfloor$ values with $p$-adic valuation $\gamma_p(d)-h$ for each $h\in\{1,\ldots ,\alpha_p\}$ and $k_p(d)t-\left\lfloor\frac{d-1}{p^{\gamma_p(d)-\alpha_p}}\right\rfloor$ values with $p$-adic valuation $\gamma_p(d)-\alpha_p-1$, where $\alpha_p$ is the greatest integer such that $k_p(d)t-\left\lfloor\frac{d-1}{p^{\gamma_p(d)-\alpha_p}}\right\rfloor >0$. Thus, $$\alpha_p=\left\lfloor\log_p \frac{k_p(d)tp^{\gamma_p(d)}}{d}\right\rfloor.$$ After the above preparation we estimate the least possible $p$-adic valuation of $\frac{d^{k_p(d)t}}{i_1\cdot \ldots \cdot i_{k_p(d)t}}$:
\begin{align*}
& \nu_p\left(\frac{d^{k_p(d)t}}{i_1\cdot \ldots \cdot i_{k_p(d)t}}\right)\geq \nu_p(d)k_p(d)t-\left\lfloor\frac{d-1}{p^{\gamma_p(d)}}\right\rfloor\gamma_p(d)\\
& -\sum_{h=1}^{\alpha_p}\left(\left\lfloor\frac{d-1}{p^{\gamma_p(d)-h}}\right\rfloor-\left\lfloor\frac{d-1}{p^{\gamma_p(d)-h+1}}\right\rfloor\right)(\gamma_p-h)\\
& -\left(k_p(d)t-\left\lfloor\frac{d-1}{p^{\gamma_p(d)-\alpha_p}}\right\rfloor\right)(\gamma_p-\alpha_p-1)\\
& =(\nu_p(d)-\gamma_p(d))k_p(d)t+\sum_{h=1}^{\alpha_p}\left(\left\lfloor\frac{d-1}{p^{\gamma_p(d)-h}}\right\rfloor-\left\lfloor\frac{d-1}{p^{\gamma_p(d)-h+1}}\right\rfloor\right)h\\
& +\left(k_p(d)t-\left\lfloor\frac{d-1}{p^{\gamma_p(d)-\alpha_p}}\right\rfloor\right)(\alpha_p+1)\\
& =(\nu_p(d)-\gamma_p(d))k_p(d)t\\
& +\sum_{g=1}^{\alpha_p}\left(\sum_{h=g}^{\alpha_p}\left(\left\lfloor\frac{d-1}{p^{\gamma_p(d)-h}}\right\rfloor-\left\lfloor\frac{d-1}{p^{\gamma_p(d)-h+1}}\right\rfloor\right)+\left(k_p(d)t-\left\lfloor\frac{d-1}{p^{\gamma_p(d)-\alpha_p}}\right\rfloor\right)\right)\\
& +\left(k_p(d)t-\left\lfloor\frac{d-1}{p^{\gamma_p(d)-\alpha_p}}\right\rfloor\right)\\
& =(\nu_p(d)-\gamma_p(d))k_p(d)t+\sum_{g=1}^{\alpha_p+1}\left(k_p(d)t-\left\lfloor\frac{d-1}{p^{\gamma_p(d)-g+1}}\right\rfloor\right)\\
& =(\nu_p(d)-\gamma_p(d)+\alpha_p+1)k_p(d)t-\sum_{g=0}^{\alpha_p}\left\lfloor\frac{d-1}{p^{\gamma_p(d)-g}}\right\rfloor\geq \nu_p(d).
\end{align*}
We have now proved (\ref{-1m}) for any $c\in\{t,\ldots ,d-1\}$ and we are done.
\end{proof}

An easy consequence of Proposition \ref{suffcond} is the following result for $d$ being a power of a prime number.

\begin{corollary}\label{power}
Let $d=p^r$ for some prime number $p$ and positive integer $r\leq (\alpha_p+1)(t+1)-\frac{p^{\alpha_p+1}-1}{p-1}$, where $\alpha_p=\lfloor\log_p t\rfloor$. Then $d$ satisfies the condition $C_t$.
\end{corollary}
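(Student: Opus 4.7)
The plan is to apply Proposition \ref{suffcond} directly, with $d = p^r$ having a single prime divisor $p$. First I would compute each of the quantities in the hypothesis of the proposition for this particular $d$. We have $\nu_p(d) = r$ and $\gamma_p(d) = \lfloor \log_p p^r \rfloor = r$, so the quantity $\nu_p(d) - \gamma_p(d)$ vanishes. Next, since $\lfloor \frac{p^r - 1}{p^{r+1}} \rfloor = 0$, we get $k_p(d) = \max\{1, 0\} = 1$. Then the proposition's $\alpha_p = \lfloor \log_p \frac{k_p(d) t p^{\gamma_p(d)}}{d} \rfloor$ reduces to $\lfloor \log_p t \rfloor$, matching the definition of $\alpha_p$ in the statement of the corollary, so there is no clash of notation.

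With these substitutions, the hypothesis of Proposition \ref{suffcond} becomes
$$r \leq (\alpha_p + 1)\, t - \sum_{g=0}^{\alpha_p} \left\lfloor \frac{p^r - 1}{p^{r-g}} \right\rfloor.$$
The key elementary step is then the identity
$$\left\lfloor \frac{p^r - 1}{p^{r-g}} \right\rfloor = p^g - 1, \quad 0 \leq g \leq r,$$
which follows from writing $p^r - 1 = (p^g - 1) p^{r-g} + (p^{r-g} - 1)$ and observing that the remainder is less than $p^{r-g}$. Summing a geometric progression,
$$\sum_{g=0}^{\alpha_p}(p^g - 1) = \frac{p^{\alpha_p + 1} - 1}{p - 1} - (\alpha_p + 1).$$

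Substituting back, the condition to be verified is
$$r \leq (\alpha_p + 1)\, t - \frac{p^{\alpha_p + 1} - 1}{p - 1} + (\alpha_p + 1) = (\alpha_p + 1)(t + 1) - \frac{p^{\alpha_p + 1} - 1}{p - 1},$$
which is exactly the hypothesis of the corollary. Thus Proposition \ref{suffcond} applies and $d = p^r$ satisfies $C_t$. The only mild technical point is to ensure $\alpha_p \leq r$ so that the floor identity is valid for all $g$ in the range; this is automatic when $r \geq \alpha_p + 1$, and the small cases where $r \leq \alpha_p$ are already covered by Corollary \ref{equal} since then $d = p^r \leq p^{\alpha_p} \leq t$.
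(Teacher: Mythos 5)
Your proposal is correct and follows essentially the same route as the paper: specialize Proposition \ref{suffcond} to $d=p^r$, compute $k_p(p^r)=1$, $\gamma_p(p^r)=r$, $\alpha_p=\lfloor\log_p t\rfloor$ and $\lfloor (p^r-1)/p^{r-g}\rfloor=p^g-1$, then sum the geometric progression to recover the stated bound on $r$. Your extra remark handling the range $r\leq\alpha_p$ (where the floor identity would fail but $d\leq t$ makes Corollary \ref{equal} apply) is a small point of care that the paper's proof passes over silently.
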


\begin{proof}
We see that for $d=p^r$ we have $$k_p(p^r)=1,\ \alpha_p=\left\lfloor\log_p \frac{tp^{\gamma_p(p^r)}}{p^r}\right\rfloor=\left\lfloor\log_p t\right\rfloor\ \text{and}\ \left\lfloor\frac{p^r-1}{p^{\gamma_p(p^r)-h}}\right\rfloor =\left\lfloor\frac{p^r-1}{p^{r-h}}\right\rfloor =p^h-1.$$ Then the lower bound for the $p$-adic valuation of $\frac{p^{rt}}{i_1\cdot \ldots \cdot i_{kt}}$ takes the form 
\begin{align*}
& (\nu_p(p^r)-\gamma_p(p^r)+\alpha_p+1)t-\sum_{g=0}^{\alpha_p}\left\lfloor\frac{p^r-1}{p^{\gamma_p(p^r)-g}}\right\rfloor =(\alpha_p+1)t-\sum_{g=0}^{\alpha_p}(p^g-1)\\
& =(\alpha_p+1)(t+1)-\frac{p^{\alpha_p+1}-1}{p-1}.
\end{align*}
Hence $d$ satisfies the condition $C_t$, which was to be proved.
\end{proof}

Summing up the results from this section, we give the proof of Theorem \ref{main}.

\begin{proof}[Proof of Theorem \ref{main}]
If $d\leq t$, then $d$ satisfies the condition $C_t$ by Corollary \ref{equal}. 

If $d=p^r$, where $p$ is a prime number and $r$ is a positive integer not from the interval $$\left((\alpha_p+1)(t+1)-\frac{p^{\alpha_p+1}-1}{p-1},(\alpha_p+1)t-\nu_p(t!)\right],$$ then we check the validity of the condition $C_t$ by Corollary \ref{power} for $r\leq (\alpha_p+1)(t+1)-\frac{p^{\alpha_p+1}-1}{p-1}$ and failure of $C_t$ by Proposition \ref{toolargevaluation} for $r>(\alpha_p+1)t-\nu_p(t!)$. If additionally $p$ is an odd prime number and $t=p^u$ for some positive integer $u$, then from Corollary \ref{powernot} we conclude the failure of the condition $C_t$ for $r>(\alpha_p+1)(t+1)-\frac{p^{\alpha_p+1}-1}{p-1}$. Indeed, Corollary \ref{powernot} states the failure of $C_t$ for $r>ut+1-\nu_p((t-1)!)$. Thus, it suffices to show that $$ut+1-\nu_p((t-1)!)=(\alpha_p+1)(t+1)-\frac{p^{\alpha_p+1}-1}{p-1}.$$ First, we see that $\alpha_p=\lfloor\log_pt\rfloor =u$. Second, we compute
$$\nu_p((t-1)!)=\sum_{j=1}^{u}\left\lfloor\frac{t-1}{p^j}\right\rfloor =\sum_{j=1}^{u-1}(p^{u-j}-1)=\left(\sum_{j=0}^{u-1}p^j\right)-u=\frac{p^u-1}{p-1}-u.$$
Finally, compute
\begin{align*}
& ut+1-\nu_p((t-1)!)=ut+1-\frac{p^u-1}{p-1}+u=ut+u+t+1-\frac{p^u-1}{p-1}-t\\
& =(u+1)(t+1)-\frac{p^u-1}{p-1}-p^u=(u+1)(t+1)-\frac{p^{u+1}-1}{p-1}\\
& =(\alpha_p+1)(t+1)-\frac{p^{\alpha_p+1}-1}{p-1}.
\end{align*}

If $d$ is a composite number not being a power of prime number and such that $d>tp^{\nu_p(d)}$, then $d$ does not satisfy the condition $C_t$ by Proposition \ref{toolargevalue}.
\end{proof}

\section{Remarks and examples}

If we fix $t\in\N_+$, Theorem \ref{main} gives us a criterion for satisfying $C_t$ for all but finitely many positive integers $d$. The theorem does not cover the case of $d\in A_t$ but then we can try to apply Propositions \ref{beta}, \ref{toolargevaluation}, \ref{particular}, \ref{suffcond} or the following one. 

\begin{prop}\label{another}
Let $t=qp^u$, $d=(q+1)p^r$, where $p$ is a prime number and $q,u,r$ are positive integers such that $p\nmid q(q+1)$ and $r>ut-\nu_p(t!)$. Then $d$ does not satisfy the condition $C_t$.
\end{prop}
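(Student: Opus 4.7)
The plan is to recognize the statement as a direct specialization of Proposition~\ref{beta}, invoked with the prime $p$ appearing in both $t=qp^u$ and $d=(q+1)p^r$, and with the parameter $\beta_p=u$. First I would check that $\nu_p(d)=r$: the coprimality hypothesis $p\nmid q(q+1)$ yields in particular $p\nmid q+1$, hence $\nu_p((q+1)p^r)=r$. (The other half, $p\nmid q$, is only needed so that $\nu_p(t)=u$ and $\nu_p(t!)$ unambiguously denotes the quantity occurring in the hypothesis.) The first inequality of Proposition~\ref{beta}, namely $\nu_p(d)>t\beta_p-\nu_p(t!)$, then reads $r>ut-\nu_p(t!)$, which is exactly the standing assumption. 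The second, $d>tp^{\nu_p(d)-\beta_p}$, reads $(q+1)p^r>qp^u\cdot p^{r-u}=qp^r$, which is immediate from $q+1>q$. Proposition~\ref{beta} then yields that $d$ does not satisfy $C_t$.

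If instead a self-contained proof in the style of Proposition~\ref{particular} is desired, I would reduce to (\ref{cong}) via Lemma~\ref{equivconds} and Corollary~\ref{equivcond}, then take $c=tp^{r-u}=qp^r<(q+1)p^r=d$ and $n=-1$, and identify the summand with least $p$-adic valuation. For $k=1$ the optimal choice is $\{i_1,\ldots,i_t\}=\{-p^{r-u},-2p^{r-u},\ldots,-tp^{r-u}\}$, the full list of multiples of $p^{r-u}$ in $\{-c,\ldots,-1\}$, which produces the summand $\frac{d^t}{(-1)^t t!\,p^{t(r-u)}}$ with $p$-adic valuation $tr-t(r-u)-\nu_p(t!)=ut-\nu_p(t!)<r=\nu_p(d)$. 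A short comparison parallel to that in the proof of Proposition~\ref{beta} shows that every other summand (larger $k$, or the same $k$ with a different index set) has strictly greater $p$-adic valuation, so (\ref{cong}) fails modulo $d$.

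There is essentially no obstacle here: the only real content beyond invoking Proposition~\ref{beta} is the observation that the slack $d=(q+1)p^r>qp^r=tp^{r-u}$ makes its second hypothesis non-vacuous. This is precisely what fails in the setting of Proposition~\ref{particular}, where $d=qp^r$ forces $d=tp^{r-u}$ and obliges one to decrease $c$ to $(t-1)p^{r-u}+p^{r-u-1}$ and to track an extra sum of units modulo $p$. In the present setting, the extra factor $q+1$ buys enough room that the cleaner choice $c=tp^{r-u}$ suffices and no such finesse is required.
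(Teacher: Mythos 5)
Your proposal is correct, and your first route is a genuinely tidier derivation than the one the paper gives. You observe that with $\beta_p=u$ the two hypotheses of Proposition~\ref{beta} become exactly $r>ut-\nu_p(t!)$ and $(q+1)p^r>qp^r$, so Proposition~\ref{another} is literally a special case of Proposition~\ref{beta}; the verification of the hypotheses (including $\nu_p(d)=r$ from $p\nmid q+1$, and implicitly $r>u$ so that $p^{\nu_p(d)-\beta_p}$ is a genuine integer, which follows from $r>ut-\nu_p(t!)\geq (u-1)t+1$) is all correct. The paper does not take this shortcut: it instead redoes the computation from scratch with $c=qp^r=tp^{r-u}$ and $n=-1$, isolating the unique minimal summand $\frac{(q+1)^tp^{rt}}{\prod_{l=1}^t(l-t-1)p^{r-u}}$ of $p$-adic valuation $ut-\nu_p(t!)<r$ --- which is word for word your second, self-contained argument. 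So the two routes are the same computation packaged differently; citing Proposition~\ref{beta} buys brevity and makes clear where the slack $q+1>q$ is used, while the paper's standalone version keeps the proof parallel in form to Proposition~\ref{particular}. Your closing remark correctly identifies why the degenerate case $d=qp^r=tp^{r-u}$ of Proposition~\ref{particular} escapes this argument and forces the smaller choice of $c$ there.
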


\begin{proof}
By Lemma \ref{equivconds} and Corollary \ref{equivcond}, it suffices to show that (\ref{cong}) does not hold for some $c<d$ and $n\in\Z$. We take $c=qp^r$ and $n=-1$ in (\ref{cong}) and check the validity of the following congruence:
\begin{equation}\label{c4}
\begin{split}
& \frac{1}{(qp^r)!}\sum_{k=1}^{p^{r-u}}(q+1)^{kt}p^{rkt}\sum_{-qp^r\leq i_1<\ldots <i_{qp^r-kt}\leq -1} j_1\cdot \ldots \cdot j_{qp^r-kt}\\
& =(-1)^{qp^r}\sum_{k=1}^{p^{r-u}}\sum_{-qp^r\leq i_1<\ldots <i_{kt}\leq -1} \frac{(q+1)^{kt}p^{rkt}}{i_1\cdot \ldots \cdot i_{kt}}\equiv 0\pmod{(q+1)p^r}.
\end{split}
\end{equation}
The summand on the left hand side with the least $p$-adic valuation is equal to
$$\frac{(q+1)^{t}p^{rt}}{\prod_{l=1}^t (l-t-1)p^{r-u}},$$
obtained for $k=1$ and $i_l=(l-t-1)p^{r-u}$, where $l\in\{1,\ldots ,t\}$. Its $p$-adic valuation is equal to $tu-\nu_p(t!)$. Any other summand has greater $p$-adic valuation since any value other than $(l-t-1)p^{r-u}$, $l\in\{1,\ldots ,t\}$, has $p$-adic valuation smaller than $r-u$. Thus, the left hand side of (\ref{c4}) has $p$-adic valuation $tu-\nu_p(t!)<r$. This means that the congruence \eqref{c4} is not true.
\end{proof}

However, it is possible that there still remain some values $d$ to be checked. We have no criterion for them. The reason is that we cannot compute a $p$-adic valuation of any expression 
$$\frac{1}{c!}\sum_{k=1}^{\left\lfloor\frac{c}{t}\right\rfloor}d^{kt}\sum_{n-c+1\leq j_1<\ldots <j_{c-kt}\leq n} j_1\cdot \ldots \cdot j_{c-kt}$$
 by finding a unique summand with the minimal $p$-adic valuation, for some prime divisor $p$ of $d$. Then, by Lemma \ref{equivconds} and Corollary \ref{equal}, having some remaining value $d$, it suffices to test the validity of congruences (\ref{gid}) for one fixed value $n\in\Z$ and all the values $c\in\{t,\ldots ,d-1\}$.

We support the above discussion by giving all the pairs $(t,d)\in\{1,2,3,4,5\}\times\N_+$ such that $d$ satisfies the condition $C_t$.

\bigskip

If $t=1$, then the condition $C_t=C_1$ takes the form
\begin{align*}
{n+d\choose d-1}\equiv {n\choose d-1}\pmod{d}
\end{align*}
for each integer $n$. If $d>1$, then Corollary \ref{simple} is sufficient to claim that $d$ satisfies $C_1$ if and only if $d$ is a prime number.

\bigskip

If $t=2$, then the condition $C_t=C_2$ takes the form
\begin{align*}
\frac{1}{2}\left({n+d\choose d-1}+{n-d\choose d-1}\right)\equiv {n\choose d-1}\pmod{d}
\end{align*}
for each integer $n$. This is, up to the factor $\frac{1}{2}$, the condition from the proof of \cite[Theorem 8.10]{MiUl}. Theorem \ref{main} suffices to claim that $C_2$ is satisfied if and only if $d\in\{1,8\}$ or $d=p^r$, where $p$ is a prime number and $r\in\{1,2\}$.

\bigskip

If $t=3$, then the condition $C_t=C_3$ takes the form
\begin{align*}
\frac{1}{3}\left({n+d\choose d-1}+{n+d\frac{i\sqrt{3}-1}{2}\choose d-1}+{n+d\frac{-i\sqrt{3}-1}{2}\choose d-1}\right)\equiv {n\choose d-1}\pmod{d}
\end{align*}
for each integer $n$, where $i$ is the imaginary unit. By Lemma \ref{equivconds} and Corollary \ref{equal}, its equivalent version is as follows (here we put $n=-1$):
\begin{align}\label{t=3,n=-1}
\frac{1}{c!}\sum_{k=1}^{\left\lfloor\frac{c}{3}\right\rfloor}\sum_{-c\leq i_1<\ldots <i_{3k}\leq -1} \frac{d^{3k}(-1)_c}{i_1\cdot \ldots \cdot i_{3k}}\equiv 0\pmod{d}, \quad c\in\{t,\ldots ,d-1\}.
\end{align}
Theorem \ref{main} implies that if $d\neq 6$, then $C_3$ is satisfied if and only if $d\in\{1,16,32,81\}$ or $d=p^r$, where $p$ is a prime number and $r\in\{1,2,3\}$. The value $d=6$ cannot be verified by any of Propositions \ref{beta}, \ref{toolargevaluation}, \ref{particular}, \ref{suffcond} or \ref{another}. Hence, we check the congruence \eqref{t=3,n=-1} for $d=6$:
\begin{align*}
(-1)^c\sum_{-c\leq i_1<\ldots <i_{3k}\leq -1} \frac{6^{3k}}{i_1\cdot \ldots \cdot i_{3k}}\equiv 0\pmod{6},\quad c\in\{3,4,5\}.
\end{align*}
We see that the above congruence is false for $c=5$ as we have three summands with $2$-adic valuation $0$, namely
$$\frac{6^3}{(-5)\cdot (-4)\cdot (-2)},\ \frac{6^3}{(-4)\cdot (-3)\cdot (-2)},\ \frac{6^3}{(-4)\cdot (-2)\cdot (-1)},$$
and the remaining ones have positive $2$-adic valuation. Thus, the condition $C_3$ is not satisfied by $d=6$.

\bigskip

If $t=4$, then the condition $C_t=C_4$ takes the form
\begin{align*}
\frac{1}{4}\left({n+d\choose d-1}+{n+di\choose d-1}+{n-d\choose d-1}+{n-di\choose d-1}\right)\equiv {n\choose d-1}\pmod{d}
\end{align*}
for each integer $n$. Equivalently, using Lemma \ref{equivconds} and Corollary \ref{equal}, and putting $n=-1$, we have
\begin{align*}
\frac{1}{c!}\sum_{k=1}^{\left\lfloor\frac{c}{4}\right\rfloor}\sum_{-c\leq i_1<\ldots <i_{4k}\leq -1} \frac{d^{4k}(-1)_c}{i_1\cdot \ldots \cdot i_{4k}}\equiv 0\pmod{d}.
\end{align*}
Theorem \ref{main} implies that if $d\not\in\{6,12,512,2187\}$, then $C_4$ is satisfied if and only if $d\in\{1, 32, 64, 128, 243, 256, 729\}$ or $d=p^r$, where $p$ is a prime number and $r\in\{1,2,3,4\}$. For $d=6$ we apply Proposition \ref{suffcond} to claim that $C_4$ holds. For $d=12$ we apply Proposition \ref{beta} with $p=2$ and $\beta_2=1$ to see that $C_4$ does not hold. For $d\in\{512,2187\}$ we should check the congruences
\begin{align}\label{t=4}
(-1)^c\sum_{k=1}^{\left\lfloor\frac{c}{4}\right\rfloor}\sum_{-c\leq i_1<\ldots <i_{4k}\leq -1} \frac{2187^{4k}}{i_1\cdot \ldots \cdot i_{4k}}\equiv 0\pmod{2187}
\end{align}
for $c\in\{4,\ldots ,d-1\}$. 

For $d=512$ we consider $c=384$. There are three summands with the least $2$-adic valuation, equal to $8$: $\frac{512^4}{256\cdot 128\cdot 384\cdot 64}$, $\frac{512^4}{256\cdot 128\cdot 384\cdot 192}$ and $\frac{512^4}{256\cdot 128\cdot 384\cdot 320}$. The remaining summands in the left hand side of (\ref{t=4}) have $2$-adic valuation greater than $8$, which means that the $2$-adic valuation of (\ref{t=4}) is $8$. Hence, $C_4$ is not satisfied for $d=512$.

For $d=2187$ we consider $c=1701$. The summands with the least $3$-adic valuation, equal to $6$, are $\frac{2187^4}{729\cdot 1458\cdot 243a\cdot 243b}$, where $a,b\in\{1,2,4,5,7\}$ and $a<b$. Hence, their sum is
$$\sum_{1\leq a<b\leq 7, 3\nmid ab}\frac{2187^4}{729\cdot 1458\cdot 243a\cdot 243b}=\frac{3^6}{2}\sum_{1\leq a<b\leq 7, 3\nmid ab}\frac{1}{ab}.$$
Since $\sum_{1\leq a<b\leq 7, 3\nmid ab}\frac{1}{ab}\equiv 1\pmod{3}$, the $3$-adic valuation of the left hand side in the above identity is $6$. The remaining summands in the left hand side of (\ref{t=4}) have $3$-adic valuation greater than $6$, which means that (\ref{t=4}) is not satisfied. Hence, $C_4$ does not hold for $d=2187$.

\bigskip

If $t=5$, then the condition $C_t=C_5$ takes the form
\begin{align*}
\frac{1}{5}\sum_{s=0}^4{n+d\zeta_5^s\choose d-1}\equiv {n\choose d-1}\pmod{d}
\end{align*}
for each integer $n$. Equivalently, using Lemma \ref{equivconds} and Corollary \ref{equal}, and putting $n=-1$, we have
\begin{align*}
\frac{1}{c!}\sum_{k=1}^{\left\lfloor\frac{c}{5}\right\rfloor}\sum_{-c\leq i_1<\ldots <i_{5k}\leq -1} \frac{d^{5k}(-1)_{d-1}}{i_1\cdot \ldots \cdot i_{5k}}\equiv 0\pmod{d}, \quad c\in\{5,\ldots ,d-1\}.
\end{align*}
Theorem \ref{main} implies that if $d\not\in\{6,12,15,20,4096,19683\}$, then $C_5$ is satisfied if and only if $d\in\{1,64,128,256,512,729,1024,2048,2187,6561,15625\}$ or $d=p^r$, where $p$ is a prime number and $r\in\{1,2,3,4,5\}$. For $d\in\{6,12\}$ we apply Proposition \ref{suffcond} to conclude that $C_5$ holds. For $d\in\{15,20,4096,19683\}$ we should check the congruences
\begin{align}\label{t=5}
(-1)^c\sum_{k=1}^{\left\lfloor\frac{c}{5}\right\rfloor}\sum_{-c\leq i_1<\ldots <i_{5k}\leq -1} \frac{d^{5k}}{i_1\cdot \ldots \cdot i_{5k}}\equiv 0\pmod{p^{\nu_p(d)}}
\end{align}
for $c\in\{5,\ldots ,d-1\}$ and any prime divisor $p$ of $d$.

For $d=15$ we take $c=13$ and see that the summands in the above congruence with the least $3$-adic valuation, equal to $0$, are of the form $\frac{15^5}{3\cdot 6\cdot 9\cdot 12\cdot a}$, where $a\in\{1,\ldots ,13\}$ and $3\nmid a$. Then the sum of these summands is
$$\sum_{a=1, 3\nmid a}^{13}\frac{15^5}{3\cdot 6\cdot 9\cdot 12\cdot a}=\frac{5^5}{8}\sum_{a=1, 3\nmid a}^{13}\frac{1}{a}\equiv 1\pmod{3}$$
and thus it has $3$-adic valuation equal to $0$. As a result, the left hand side of congruence (\ref{t=5}) has $3$-adic valuation equal to $0$, which means that the condition $C_5$ does not hold.

For $d=20$ we take $c=18$ and see that there are $5$ summands in the above congruence with the least $2$-adic valuation.  They are of the form
$$\frac{20^5}{4\cdot 8\cdot 12\cdot 16\cdot 2a},\ a\in\{1,3,5,7,9\}$$
and their $2$-adic valuation is equal to $-2$. As a result, the left hand side of congruence (\ref{t=5}) has $2$-adic valuation equal to $-2$, which means that the condition $C_5$ does not hold.

For $d=4096=2^{12}$ we take $c=3072$ and see that there are $3$ summands in the above congruence with the least $2$-adic valuation. They are of the form
$$\frac{4096^5}{1024\cdot 2048\cdot 3072\cdot 512a\cdot 512b},\ a,b\in\{1,3,5\},\ a<b$$
and their $2$-adic valuation is equal to $11$. As a result, the left hand side of congruence (\ref{t=5}) has $2$-adic valuation equal to $11$, which means that the condition $C_5$ does not hold.

For $d=19683=3^9$ we take $c=15309$ and see that the summands in the above congruence with the least $3$-adic valuation, equal to $8$, are of the form
$$\frac{19683^5}{6561\cdot 13122\cdot 2187a\cdot 2187b\cdot 2187c},\ a,b,c\in\{1,2,4,5,7\},\ a<b<c.$$ Then the sum of these summands is
\begin{align*}
& \sum_{1\leq a<b<c\leq 7, 3\nmid abc}\frac{19683^5}{6561\cdot 13122\cdot 2187a\cdot 2187b\cdot 2187c}\\
& =\frac{3^8}{2\cdot 1\cdot 2\cdot 4\cdot 5\cdot 7}\sum_{1\leq x<y<z\leq 7, 3\nmid xyz}xyz\equiv 2\cdot 3^8\pmod{3^9}
\end{align*}
and thus it has $3$-adic valuation equal to $8$. As a result, the left hand side of congruence (\ref{t=5}) has $3$-adic valuation equal to $8$, which means that the condition $C_5$ does not hold.

\section*{Acknowledgements}

The author wishes to thank the anonymous referee for their careful reading of the paper and remarks that improved its edition.

\end{document}